\documentclass[12pt]{amsart}

\usepackage[dvipsnames]{xcolor}

\usepackage{hyperref} 
\hypersetup{
    colorlinks=true,
    linkcolor=blue,
    citecolor=blue,
    urlcolor=blue
}

\usepackage{amsmath}
\usepackage{amssymb}
\usepackage{mathrsfs}
\usepackage{amscd}
\usepackage{graphicx}
\usepackage{mathdots}

\usepackage{gensymb}
\usepackage{epstopdf}
\usepackage{todonotes}
\usepackage{youngtab}
\usepackage{wrapfig}
\usepackage{float}

\usepackage{bbm}
\usepackage{bm}
\usepackage{verbatim}
\usepackage[margin=1.1in]{geometry}
\usepackage[nocompress]{cite}
\usepackage{epsfig}       
\usepackage{epic,eepic}   

\usepackage{tikz}
\usetikzlibrary{arrows.meta}

\usepackage{mathpazo}
\usepackage{domitian}
\usepackage[T1]{fontenc}

\usepackage{fancyhdr}
\newtheorem{thm}{Theorem}[section]

\newtheorem{proposition}[thm]{Proposition}

\theoremstyle{definition}
\newtheorem{definition}[thm]{Definition}

\theoremstyle{remark}

\numberwithin{equation}{section}

\begin{document}

\title{Behaviour near explosion\\ in continuous-time Galton--Watson trees}
\subjclass{60J80, 60J85, 60G55}

\keywords{Continuous-time Galton--Watson processes, finite-time explosion, coalescent processes, Beta-coalescents, continuous-state branching processes, Gamma distributions, Markov processes}

\date{\today}

\author{Simon C. Harris}
\address{Department of Statistics, University of Auckland, Auckland, New Zealand}
\email{s.harris@auckland.ac.nz}

\author{Samuel G. G. Johnston}
\address{Department of Mathematics, King's College London, London, United Kingdom}
\email{samuel.g.johnston@kcl.ac.uk}

\author{Juan-Carlos Pardo}
\address{Centro de Investigaci\'on en Matem\'aticas (CIMAT), Guanajuato, Mexico}
\email{jcpardo@cimat.mx}

\begin{abstract}We study continuous-time Galton--Watson trees whose offspring generating function takes the form 
\begin{align} \label{eq:gen}
f(s) = s - (1-s)^\alpha L(1-s),
\end{align}
where $\alpha \in (0,1)$ and where $L:[0,1] \to [0,\infty)$ is slowly varying at zero. Processes with the generating function in \eqref{eq:gen} explode in finite time. We observe that, conditional on explosion at time $T$, at each  earlier time $t < T$ there is a unique particle $\xi_t$ alive at time $t$  who is an ancestor of all but finitely many particles at the explosion time. We call $(\xi_t)_{t \in [0,T)}$ the spine to explosion, and show that the births off the spine admit a Poissonian description, where they become both more frequent and larger as $t \uparrow T$.  

We undertake a careful study of the size of the population leading up to explosion,  showing in particular that near explosion, the rescaled population is approximately gamma distributed. More generally, conditional on explosion at time $T$, define a stochastic process $Z^\varepsilon := (Z_t^\varepsilon)_{t \in \mathbb{R}}$ by setting
\begin{align*}
Z_t^\varepsilon := E(\varepsilon e^{-t})N_{T-\varepsilon e^{-t}},
\qquad
E(t) = \mathbf{P}(N_t = \infty).
\end{align*}
 We show that as $\varepsilon \downarrow 0$, $Z^\varepsilon$  converges in finite-dimensional distributions to a stationary continuous-state branching process with immigration whose branching mechanism is subcritical and whose immigration mechanism corresponds to spine events. Finally, we show that this limiting process has a Markovian time reversal, and that the coalescent process associated with it is a stochastically time-changed Beta$(2-\alpha,\alpha)$-coalescent. 

\end{abstract}
\maketitle

\section{Introduction and overview}

\subsection{Explosions in branching processes}
A fundamental stochastic model for the evolution of a randomly evolving population is the continuous-time Galton--Watson tree.
In a continuous-time Galton--Watson tree with unit-rate lifetimes and offspring distribution $(p_n)_{ n \geq 0}$, we start at time zero with a single particle. This particle has a standard exponential lifetime, and at the end of this lifetime dies and is replaced by a random number $K$ of children, where $\mathbf{P}(K = n ) = p_n$. The children of the original particle repeat the behaviour of their parent independently of one another, i.e.\ they die at rate $1$ and are replaced by $n$ children upon death with probability $p_n$.

The overall behaviour of continuous-time Galton--Watson trees is well captured by the expectation $m = \sum_{ n \geq 0} n p_n$ of the offspring distribution, provided $m < \infty$. Indeed, if $(N_t)_{t \geq 0}$ is the stochastic process where $N_t$ counts the number of particles alive at time $t \geq 0$, then $\mathbb{E}[N_t] = e^{ (m-1) t}$. Moreover, various limit theorems (such as Yaglom's limit law and the Kesten-Stigum theorem) capture the asymptotic behaviour of these processes in the respective cases $m < 1, m = 1, m > 1$.

\begin{figure}[ht]
    \centering
    \includegraphics[width=0.8\textwidth]{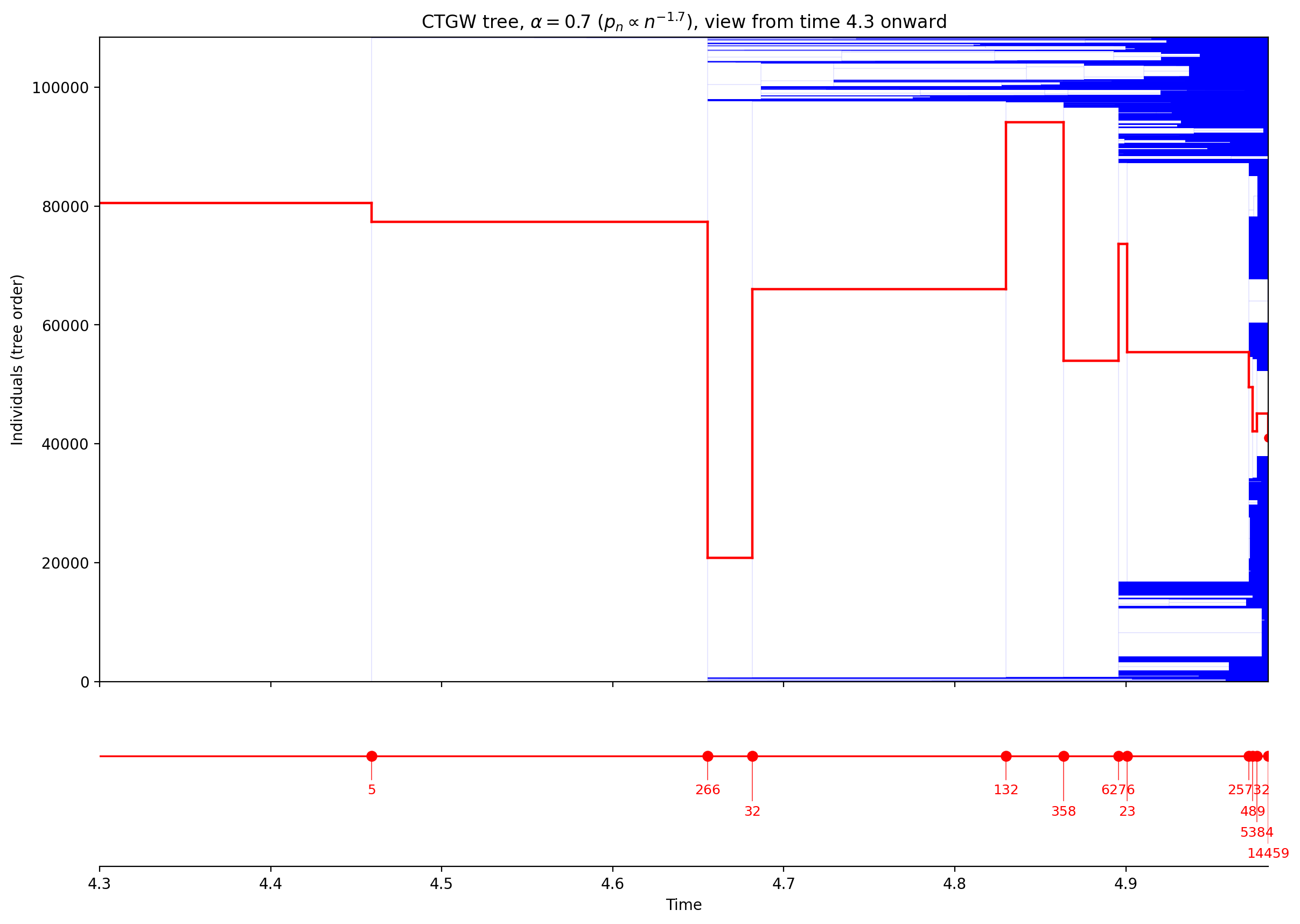}
    \caption{This figure depicts a simulation of a continuous-time Galton--Watson tree with offspring distribution $p_n = C/n^{1+\alpha}$ with $\alpha = 0.7$. In this simulation, the process is shown from time $4.3$ until time $T-\varepsilon := 4.98$, at which point the number of particles exceeds 100,000. The process explodes shortly after. The red lineage shows the lineage to explosion. The line below plots the moments at which the spinal ancestors died, and how many children they had upon death.}
    \label{fig:explosive}
\end{figure}

In the present article, we are interested in processes for which $m = \infty$, i.e.\ for which the expected number of children of any given particle in the process is infinite.
If the expectation of the offspring distribution satisfies $m = \infty$, then we also have $\mathbb{E}[N_t] = \infty$ for the expected number of particles alive at any time $ t> 0$. 
Continuous-time Galton--Watson trees with infinite expectation are relatively underexplored in the literature.

Firstly,
consider processes that `only just' have infinite mean. In particular, the process with offspring distribution $p_n = 1/(n(n-1))$ (for $n \geq 2$) has been studied in the literature, and it may be shown that 
its population size 
experiences double-exponential growth; in other words, its logarithm grows on the scale $e^t$ (see, e.g., Grey \cite{grey} for the analogue in the continuous-state case). 
Note that if $\mathbb{P}(K = n) = p_n$, then $\mathbf{E}[K] = \infty$ but $\mathbf{E}[K^{1-\varepsilon}]$ is finite for any $\varepsilon > 0$.

In the present paper, we will consider heavier tails still, where, for some $\alpha \in (0,1)$ the offspring distribution has a finite $\alpha-\varepsilon$ moment and infinite $\alpha+\varepsilon$ moment for all $\varepsilon > 0$. More specifically, we consider the case where the offspring generating function  $f(s) = \sum_{\ell \geq 0 }p_\ell s^\ell$  takes the form
\begin{align} \label{eq:offspring}
f(s) = s - (1-s)^\alpha L(1-s), \qquad \alpha \in (0,1),
\end{align}
where $L:[0,1]\to[0,\infty)$ is slowly varying at zero. Since $p_0=f(0)=-L(1)$ and both $p_0$ and $L(1)$ are nonnegative, this assumption forces $p_0=L(1)=0$, so extinction is excluded.
 
The simplest example of an offspring distribution with this form is
\begin{align*}
p_\ell = \frac{1}{\zeta(1+\alpha)}\frac{1}{\ell^{1+\alpha}}  \qquad \ell \geq 1,
\end{align*}
where $\zeta(s) := \sum_{n \geq 1} n^{-s}$ is the Riemann zeta function.

Continuous-time Galton--Watson trees with the offspring generating function given in \eqref{eq:offspring} have the striking property that they experience \emph{explosion in finite time}. In other words, there exists almost surely a random time $\tau \in (0,\infty)$, which we call the \textbf{explosion time}, such that
\begin{align*}
\text{$N_t < \infty $ for $t < \tau$} \qquad \text{and}  \qquad  \text{   $\lim_{t \uparrow \tau} N_t = \infty$}. 
\end{align*}
To avoid any ambiguity, we will also set $N_t = \infty$ for all 
$t \geq \tau$.
In this article we study in detail the behaviour of these continuous-time Galton--Watson trees as we approach explosion. In particular, we are interested in describing the population size and the relationships between different particles near explosion. 

\subsection{Goal of the present article}
The goal of the present article is to understand the behaviour of explosive continuous-time Galton--Watson trees with the offspring generating function in \eqref{eq:offspring} \emph{as we approach explosion}. More specifically, we would like to tackle questions such as:
\begin{enumerate}
\item \textbf{Explosion probability distribution.} Let $E(t) := \mathbf{P}(N_t = \infty)$ be the probability that the process has exploded by time $t$. That is, $E(t)$ is the cumulative distribution function of the law of the explosion time. What properties does $E(t)$ have? 
\item \textbf{Population size near explosion}. 
Suppose we condition on the event $\{ \tau = T \}$ that the process explodes at time $T > 0 $. 
What is the space-scale of the population before explosion? That is, conditional on explosion at time $T$, what is the order of the population size at time $T-\varepsilon$ as $\varepsilon \downarrow 0$? 
Can we prove a limit theorem for the law of the population size at time $T-\varepsilon$? 
\item \textbf{Population timescale near explosion}. What is the time-scale of the population before explosion? More explicitly, again conditioning on $\{\tau = T \}$, for fixed positive reals $u_1>u_2$, what is the dependence between the population size at time $T-u_1\varepsilon$ and time $T - u_2\varepsilon$? Is there a favourable time change we can use to understand the behaviour of the population size process as we approach explosion? How do our answers to this question and the previous one depend on the actual explosion time $T$? 
\item \textbf{Joint coalescent structure near explosion.} Finally, can we describe the coalescent structure of the tree at, or close to, explosion? Namely, if we pick $k$ particles uniformly at random from the population at time $T-\varepsilon$, what does the joint genealogical tree of these particles look like? On what time-scale did their most recent common ancestor last exist? 
\end{enumerate}

Indeed, we will answer all these questions in what follows. 
More precisely, we will show that under the right time and space scaling, the population converges in distribution to a continuous-state branching process with immigration, and we will describe the limiting genealogical structure by sampling directly from this limit process.

\subsection{Overview of our main results}
We briefly outline our main results.
First we discuss basic properties of the probability $E(t)$ of having exploded by time $t$. We are able to characterise both the small- and large-time behaviour of $E(t)$. We show in Proposition \ref{prop:largeE} that $1 - E(t)$ decays exponentially for \emph{large times} $t$. 
More importantly for us, in Proposition \ref{prop:smallE} we show that at \emph{small times} the explosion probability satisfies
\begin{align*}
E(t)
=
t^{\frac{1}{1-\alpha}}L_1(t),
\end{align*}
where $L_1(\cdot)$ is a slowly varying function at zero that can be described in terms of the slowly varying function $L$ in \eqref{eq:offspring}. 

Perhaps surprisingly, the small-$t$ asymptotics of $E(t)$ dictate the likely order of the population size near explosion. Indeed, suppose we would like to condition on explosion happening at time $T$, and at time $T-\varepsilon$ we have $N$ particles alive. Each of these $N$ particles has a small probability $E(\varepsilon)$ of giving rise to an explosion by time $T$. For exactly one of these events to occur with probability of order one, the natural scale is therefore $N$ of order $E(\varepsilon)^{-1}$. In fact, by renormalising the population by $E(\varepsilon)$, we can prove that conditional on explosion at time $T$, we have the limiting convergence in distribution:
\begin{align} \label{eq:fixedlaw}
E(\varepsilon)N_{T-\varepsilon}
\Longrightarrow
\mathrm{Gamma}(\alpha,1)
\qquad
\text{as }\varepsilon\downarrow0.
\end{align}
In particular, the limiting distribution does not depend on the value of the explosion time $T$. Theorem \ref{thm:main} strengthens this to finite-dimensional convergence under a simultaneous change of space and time.

Our perspective on these explosive continuous-time Galton--Watson trees is significantly enriched by the notion of what we call the \textbf{spine to explosion} $(\xi_t)_{t\in[0,T)}$. Conditional on explosion at time $T$, at each time $t<T$ there is a unique particle $\xi_t$ whose descendants are responsible for the explosion at time $T$. With this picture in mind, by making good use of the branching property we can decompose the population at time $t$ into $\xi_t$, the spine particle, and $N_t-1$ nonspine particles. 

The explosive spine particle has dramatically different behaviour from its nonspine relatives, giving birth at an increasingly rapid rate as we approach explosion, and to increasingly large population sizes.
In Theorem \ref{thm:spinethm} we describe this behaviour quantitatively. A small time $\varepsilon$ before explosion, the spine gives birth at asymptotic rate
\begin{align*}
\frac{\alpha}{1-\alpha}\frac{1}{\varepsilon},
\end{align*}
while the number of children produced at such a birth is of order $E(\varepsilon)^{-1}$. After multiplication by $E(\varepsilon)$, these birth sizes converge to a Gamma$(1-\alpha,1)$ distribution. Equivalently, after making the logarithmic time change
\begin{align*}
t\longmapsto T-\varepsilon e^{-t},
\end{align*}
the spinal branching events occur asymptotically at a constant rate. Thus, as the explosion time approaches, spinal births become both increasingly frequent and increasingly large.

This logarithmic time change also turns out to be the natural time scale for the entire population. Conditional on explosion at time $T$, define a stochastic process $Z^{\varepsilon,T} := (Z_t^{\varepsilon,T})_{t \in \mathbb{R}}$ by 
\begin{align*}
Z_t^{\varepsilon,T}
:=
E(\varepsilon e^{-t})
N_{T-\varepsilon e^{-t}},\qquad t\geq-\log(T/\varepsilon).
\end{align*}
For earlier $t$, set $Z_t^{\varepsilon,T}=E(T)$.
Our main population-size result, Theorem \ref{thm:main}, states that, as $\varepsilon\downarrow0$, the process $Z^{\varepsilon,T}$  converges in finite-dimensional distributions to a stationary Markov process $(Z_t)_{t\in\mathbb R}$ whose law is again independent of $T$. The limiting process is a continuous-state branching process with immigration. Its branching mechanism is subcritical, while its immigration mechanism consists of jumps arriving at rate $\alpha/(1-\alpha)$ with Gamma$(1-\alpha,1)$ distributed sizes. The stationary law is Gamma$(\alpha,1)$, and the immigration represents births from the spine to explosion. After rescaling, the limiting law depends only on $\alpha$, not on $T$ or the slowly varying function $L$.

Finally,  we study the genealogy naturally associated with the limiting process using the natural notions of genealogy for continuous-state branching processes \cite{BLG, JL}. Since $(Z_t)_{t\in\mathbb R}$ is stationary, it admits a right-continuous Markovian time reversal $(R_t)_{t\in\mathbb R}$, where $R_t=Z_{(-t)-}$. Forward-time branching and immigration jumps become downward jumps of the reversed population, and a jump which removes a proportion $v$ of the population gives rise naturally to a merger in which each ancestral lineage participates independently with probability $v$. The resulting jump measure has the form
\begin{align*}
\Gamma(1+\alpha)R_{t-}^{1-\alpha}v^{-2}
\mathcal B_{2-\alpha,\alpha}(\mathrm{d}v),
\end{align*}
where $\mathcal B_{2-\alpha,\alpha}$ is the Beta$(2-\alpha,\alpha)$ distribution and $R$ is the time-reversed population process. 
Consequently, after a random time change with instantaneous speed proportional to $R_{t-}^{1-\alpha}$, the genealogy is a standard Beta$(2-\alpha,\alpha)$-coalescent.

The spinal decomposition remains visible in this coalescent description. Conditional on a reverse jump which removes a proportion $v$ of the population, the jump arose from forward-time spinal immigration with probability $v$, and from a nonspine branching event with probability $1-v$. This leads to a two-type version of the limiting coalescent, in which one distinguished red block records those sampled lineages which have merged into the spine to explosion. 
After the same random time change, this coloured process may be realised particularly simply by taking a standard Beta$(2-\alpha,\alpha)$-coalescent on $k+1$ particles, declaring the block containing the $(k+1)$-st particle to be the spinal block, and then projecting onto the original $k$ sampled particles.

\subsection{Overview of this article}

The remainder of this article is structured as follows. 
We state our main results and discuss some related literature in Section \ref{sec:results}.
We then study in Section \ref{sec:foundations} the basic properties of our branching process at fixed times and fixed explosion times, and discuss the explosive spine decomposition.
In Section \ref{sec:A}, we prove our first main result, Theorem \ref{thm:spinethm}, on the asymptotic behaviour of the spine particle as we  approach explosion.
In Section \ref{sec:B}, we prove our second main result, Theorem \ref{thm:main}, which describes the overall population size process as we approach explosion.
In Section \ref{sec:C}, we prove our final main results, which discuss the time-reversed population size process and the associated coalescent process. 

\section{Main results and further discussion} \label{sec:results}

\subsection{Explosion time and explosion scale}
There is a natural scale on which to view the number of particles alive as we approach explosion, and it is intimately related to the small-$\varepsilon$ asymptotics of the probability that a population starting with a single particle has exploded by time $\varepsilon$. In this direction, 
recall
\begin{align*}
E(t) = \mathbf{P}( \tau \leq t) = \mathbf{P}( N_t = \infty)
\end{align*}
denotes the probability of explosion by time $t$ when starting with a single particle at time $0$. We are interested in the small-$\varepsilon$ asymptotics of $E(\varepsilon)$, and we will show that it takes the form 
\begin{align} \label{eq:smallE}
E(\varepsilon) = \varepsilon^{\frac{1}{1-\alpha}} L_1(\varepsilon),
\end{align}
where $L_1$ is a slowly varying function that can be characterised in terms of the slowly varying function $L$ in \eqref{eq:offspring}. 

Heuristically, a population of order $1/E(\varepsilon)$ has a probability of order one of exploding within the next $\varepsilon$ units of time. In other words $E(\varepsilon)$ is a natural factor by which to rescale the number of particles alive at a time $\varepsilon$ before explosion. Let us just say here that it is a consequence of our later results that, conditional on $\tau = T$, the law of the random variable $Z^\varepsilon_0 := E(\varepsilon)N_{T-\varepsilon}$ converges as $\varepsilon \downarrow 0$ to a $\mathrm{Gamma}(\alpha,1)$ random variable, in the sense that
\begin{align*}
\lim_{\varepsilon \downarrow 0 } \mathbf{P}( E(\varepsilon)N_{T-\varepsilon} \leq x | \tau = T ) = \frac{1}{\Gamma(\alpha)}\int_0^x u^{\alpha-1}e^{-u}\mathrm{d}u.
\end{align*}
We will refine this result substantially in the sequel.
In particular, we will consider the scaled population size as a \emph{process} as it approaches the explosion time.

\subsection{Main results}

\subsubsection{The spine process as we approach explosion}
Throughout, write $\mathbf P^T=\mathbf P(\,\cdot\mid\tau=T)$ for the law conditional on explosion at time $T$, and $\mathbf E^T$ for its expectation. The precise version of this conditional law is specified in Section \ref{sec:conditioning}.

Our first main result is an asymptotic description of the behaviour of the spine particle as we approach explosion.
First, we give a definition of the spine to explosion. We will give a more precise definition in Section \ref{sec:foundations} using the Ulam--Harris labelling of a continuous-time Galton--Watson tree, but the definition we provide here should prove sufficient.

Let $\tau$ be the random explosion time of the process starting with a single individual at time $0$.
We make the basic observation that $\tau$ takes values in $(0,\infty)$ and has a continuous density with respect to Lebesgue measure.
Suppose now that there are $n$ particles alive at some time $t>0$. Conditional on the process up to time $t$, the explosion time has the law of $t+\min_{1\leq i\leq n}\tau_i$, where $\tau_1,\ldots,\tau_n$ are independent random variables distributed like $\tau$.
Since each $\tau_i$ has a density with respect to Lebesgue measure, the minimum is uniquely attained by one of the $\tau_i$ almost surely.
Said in different terms, given a finite set $\mathcal{N}_t$ of particles alive at a certain time before explosion, there is a unique particle $\xi_t\in\mathcal{N}_t$ with the property that $\xi_t$ has infinitely many descendants alive at time $\tau$, whereas every $u\in\mathcal{N}_t\setminus\{\xi_t\}$ has only a finite number of descendants alive at time $\tau$.

We also note that $(\xi_t)_{t\in[0,\tau)}$ is indeed an ancestral line of descent, in the sense that
\begin{align*}
0\leq t<u<\tau \implies \xi_t\preceq\xi_u,
\end{align*}
where $\xi_t\preceq\xi_u$ denotes the fact that $\xi_t$ is a (weak) ancestor of $\xi_u$.

With this in mind, we define:

\begin{definition}[The spine to explosion]
Let $\tau$ denote the explosion time of the process. The \textbf{spine to explosion} $(\xi_t)_{t\in[0,\tau)}$ is a distinguished lineage of descent running through the process up to the explosion time and defined for $t\in[0,\tau)$ by
\begin{align*}
\xi_t:=\text{the unique particle at time $t$ with an infinite number of descendants at time $\tau$}.
\end{align*}
\end{definition}

We will be interested in viewing the process through an enriched filtration in which we observe not just the entire underlying Galton--Watson tree up to the current time, but also which particle is the spine to explosion. In particular, with this enriched view of the process, we can ask questions about how the spine to explosion behaves forwards in time.

Note that when we condition on the event $\{\tau=T\}$, the spine to explosion is a stochastic process $(\xi_t)_{t\in[0,T)}$ indexed by $t\in[0,T)$. We find that in this conditional setting, as we approach time $T$, particles along the spine give birth increasingly frequently, and the sizes of these births also grow.

This phenomenon is encompassed by Theorem \ref{thm:spinethm} below.

We give a somewhat informal statement here and prove it in Section \ref{sec:A}.

\begin{thm} \label{thm:spinethm}
Conditional on the event $\{\tau=  T\}$, that is, explosion occurring at time $T$:
\begin{enumerate}
\item A time $\varepsilon$ before explosion, the spine to explosion particle gives birth to new particles at rate $(1+o(1))(\alpha/(1-\alpha))\varepsilon^{-1}$ as $\varepsilon \downarrow 0$. More specifically, for $0 <  s<t$,
\begin{align*}
\lim_{\varepsilon \downarrow 0} \mathbf P^T( \text{spine gave birth $j$ times in $[T-t\varepsilon, T-s \varepsilon]$} ) = \lambda_{s,t}^j e^{ - \lambda_{s,t}}/j! \quad j \geq 0,
\end{align*} 
where
\begin{align*}
\lambda_{s,t} := \frac{\alpha}{1-\alpha}\int_{s \varepsilon}^{t \varepsilon} \frac{1}{u} \mathrm{d}u =\frac{\alpha}{1-\alpha} \int_s^t \frac{1}{u} \mathrm{d}u.
\end{align*}
\item \begingroup
Let $K_\varepsilon$ have the offspring law at a spinal birth with remaining time $\varepsilon$, namely
\begin{align*}
\mathbf P^T(K_\varepsilon=k)
=\frac{kp_k(1-E(\varepsilon))^{k-1}}{f'(1-E(\varepsilon))},\qquad k\geq1.
\end{align*}
Then the birth size is approximately $E(\varepsilon)^{-1}$ times a Gamma$(1-\alpha,1)$ random variable. More precisely, for $\theta\geq0$,
\begin{align*}
\lim_{\varepsilon\downarrow0}\mathbf E^T[e^{-\theta E(\varepsilon)K_\varepsilon}]
=(1+\theta)^{-(1-\alpha)}.
\end{align*}
\endgroup
\end{enumerate}
\end{thm}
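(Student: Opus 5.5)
The plan is to first establish the explicit law of the spine under $\mathbf P^T$ via an $h$-transform, and then reduce both parts of the theorem to Abelian/Tauberian statements about the small-time behaviour of $E$.

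\textbf{Spine description.} Write $F(t)=1-E(t)=\mathbf P(\tau>t)$. By the branching property, $F$ solves the backward equation
\begin{align*}
F'(t)=f(F(t))-F(t)=-E(t)^\alpha L(E(t)).
\end{align*}
Consequently $E'(t)=E(t)^\alpha L(E(t))$, and the density of $\tau$ is $E'$. Conditioning on $\{\tau=T\}$ is the Doob transform with harmonic function $h(t)=E'(T-t)$ applied to the spinal particle.

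A particle alive at time $t$ that dies at $t+\mathrm{d}u$ with $k$ children is the spine exactly when one child explodes at time $T$ and the other $k-1$ do not explode before $T$. Let $\varepsilon=T-t-u$ denote the remaining time at the death. Then the spine death rate at remaining time $\varepsilon$ is
\begin{align*}
\frac{\sum_k k p_k F(\varepsilon)^{k-1}E'(\varepsilon)}{E'(\varepsilon)+\ldots}.
\end{align*}
More cleanly, differentiating the backward equation shows that the spine dies at rate
\begin{align*}
r(\varepsilon)=\frac{f'(F(\varepsilon))E'(\varepsilon)}{E'(\varepsilon)}\cdot(\ldots).
\end{align*}
I would compute this carefully from the identity $E''(\varepsilon)=(1-f'(F(\varepsilon)))E'(\varepsilon)$, obtained by differentiating $E'=F-f(F)$. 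The spine survives as itself with $h$-weight $E'(\varepsilon)$, and it dies at rate $1$ weighted by $f'(F(\varepsilon))E'(\varepsilon)$. Along the lines of Section~\ref{sec:foundations}, this gives a spinal jump rate of $f'(F(\varepsilon))$, with offspring law exactly the size-biased law $K_\varepsilon$ stated in part (2); births of size $k=1$ are mere relabellings.

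A genuine birth ($k\ge 2$) therefore occurs at rate
\begin{align*}
r(\varepsilon)=f'(F(\varepsilon))-p_1F(\varepsilon)^0\cdot\ldots.
\end{align*}
Since $f'(F)\to\infty$ dominates, I will simply show that $f'(F(\varepsilon))-p_1\sim \frac{\alpha}{1-\alpha}\varepsilon^{-1}$. Indeed,
\begin{align*}
f'(1-x)=1+\alpha x^{\alpha-1}L(x)+O\big(x^{\alpha}L'(x)\big),
\end{align*}
and a monotone density argument yields $f'(1-x)\sim \alpha x^{\alpha-1}L(x)$, because $f'$ is monotone and $1-f(1-x)$ is regularly varying with index $\alpha$.

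\textbf{Small-time input.} Integrating $E'=E^\alpha L(E)$ gives
\begin{align*}
\int_0^{E(\varepsilon)}\frac{\mathrm{d}y}{y^\alpha L(y)}=\varepsilon,
\end{align*}
and Karamata's theorem then yields $E(\varepsilon)^{1-\alpha}/\big((1-\alpha)L(E(\varepsilon))\big)\sim\varepsilon$. This is Proposition~\ref{prop:smallE} in the form I need: $E(\varepsilon)^{\alpha-1}L(E(\varepsilon))\sim \frac{1}{(1-\alpha)\varepsilon}$. Hence the spine rate is $\alpha E^{\alpha-1}L(E)(1+o(1))\sim\frac{\alpha}{1-\alpha}\varepsilon^{-1}$.

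\textbf{Part (1).} The spinal birth times form a Poisson process, possibly time-inhomogeneous, with intensity $r(T-\cdot)$. This holds because, under the $h$-transform, spinal deaths form a Markov jump mechanism whose rate depends only on the remaining time. The number of births in $[T-t\varepsilon,T-s\varepsilon]$ is therefore Poisson with mean $\int_{s\varepsilon}^{t\varepsilon}r(u)\,\mathrm{d}u$. Substituting $u=\varepsilon v$ and using the uniform convergence $\varepsilon v\, r(\varepsilon v)\to\frac{\alpha}{1-\alpha}$ for $v\in[s,t]$, which holds because the asymptotic is a genuine limit as $u\downarrow0$, gives $\lambda_{s,t}$.

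\textbf{Part (2).} Put $x=E(\varepsilon)$. Then
\begin{align*}
\mathbf E^T\big[e^{-\theta xK}\big]=\frac{e^{-\theta x}f'\big((1-x)e^{-\theta x}\big)}{f'(1-x)}.
\end{align*}
Since $1-(1-x)e^{-\theta x}\sim(1+\theta)x$, regular variation of $f'(1-\cdot)$ with index $\alpha-1$ gives the ratio $\to(1+\theta)^{\alpha-1}$, as claimed.

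\textbf{Main obstacle.} The main obstacle is justifying $f'(1-x)\sim\alpha x^{\alpha-1}L(x)$, which requires a monotone density theorem since $L$ need not be differentiable. I would first apply the theorem to $g(x)=1-x-f(1-x)=x^\alpha L(x)$, noting that $1-f'(1-x)=-g'(x)$, so $f'(1-x)-1=g'(x)$. Here $g'$ is monotone, because $f'$ is increasing, so the standard monotone density theorem applies. The other step needing care is making the $h$-transform spine dynamics rigorous, which is where the precise conditioning of Section~\ref{sec:conditioning} is used.
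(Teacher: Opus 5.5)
Your proposal is correct and follows essentially the same route as the paper: the spine rate $f'(1-E(\varepsilon))$ and size-biased tilted offspring law from the spine decomposition, the monotone density theorem for $f'(1-x)\sim\alpha x^{\alpha-1}L(x)$, Karamata's theorem giving $E(\varepsilon)^{\alpha-1}L(E(\varepsilon))\sim 1/((1-\alpha)\varepsilon)$, an inhomogeneous Poisson process for part (1), and the same Laplace-transform ratio for part (2). One slip in a side remark: differentiating $E'=F-f(F)$ gives $E''=(f'(F)-1)E'$, not $(1-f'(F))E'$ (with your sign, $h(t)=E'(T-t)$ would fail to be space--time harmonic), but your final argument never uses this identity.
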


Theorem \ref{thm:spinethm} says that the explosive spine has more and more rapid births as the explosion time approaches, and that these births tend to also grow in size as we approach explosion. 
Interestingly, note that the limiting behaviour is independent of the actual time of explosion $T$ and depends only on how long is left until explosion. 
The proof of Theorem \ref{thm:spinethm} is easier than that of our later results, and follows from a straightforward analysis of generating functions. Nonetheless, it is valuable to state this result first as it provides important intuition that
helps us to interpret our subsequent results. 

\subsubsection{The total population size as we approach explosion}

Our next result is a precise description of the population size process as we approach explosion. First of all, let us note that Theorem \ref{thm:spinethm} gives some indication of the correct space and time scale on which to view the population size. Indeed, part (2) of Theorem \ref{thm:spinethm} indicates that we should multiply all population sizes a time $\varepsilon$ before explosion by $E(\varepsilon)$. A useful time-change is slightly more delicate, but we observe that for $s < t$ in $\mathbb{R}$, the number of spine births in the time interval $[T- \varepsilon e^{-s}, T - \varepsilon e^{ - t} ]$ is approximately Poisson distributed with parameter $(\alpha/(1-\alpha))(t-s)$ as $\varepsilon \downarrow 0$. This suggests that, conditional on $\tau = T$, it will be useful to study the process under the time change $[-\log(T/\varepsilon),\infty) \mapsto [0,T)$ given by 
\begin{align*}
t \mapsto T - \varepsilon e^{-t}
\end{align*}
as $\varepsilon \downarrow 0$. Note that at time $T - \varepsilon e^{ - t}$, we should then rescale the population size by $E(\varepsilon e^{-t})$. All told, conditional on explosion at time $\tau = T$, this discussion leads us to consider the stochastic process $Z^{\varepsilon,T} := (Z^{\varepsilon,T}_t)_{t \in \mathbb{R}}$ defined by
\begin{align*}
Z^{\varepsilon,T}_t := E(\varepsilon e^{-t}) N_{T-\varepsilon e^{-t}},
\end{align*} 
where, if $t < t_0 := - \log(T/\varepsilon)$ we simply set $Z^{\varepsilon,T}_t := Z^{\varepsilon,T}_{t_0} = E(T)N_0 = E(T)$.

Our next main result says that regardless of the value of $T$, conditional on $\tau=T$, $Z^{\varepsilon,T}$  converges in finite-dimensional distributions as $\varepsilon \downarrow 0$ to a Markov process indexed by real time. 

\begin{thm} \label{thm:main}
For each fixed $T>0$, under $\mathbf P^T$, that is, conditional on explosion at $\tau=T$, the stochastic process
\begin{align*}
Z^{\varepsilon,T}:=(Z_t^{\varepsilon,T})_{t\in\mathbb R}
\end{align*}
converges in finite-dimensional distributions as $\varepsilon\downarrow0$ to a stationary $(0,\infty)$-valued Markov process $(Z_t)_{t\in\mathbb R}$ with generator
\begin{align*}
\mathcal L\phi(x)
=
\mathcal L_{\mathrm n}\phi(x)
+
\mathcal L_{\mathrm s}\phi(x),
\end{align*}
where the nonspine generator is
\begin{align} \label{eq:ngen}
\mathcal L_{\mathrm n}\phi(x)
:=
-\frac{x}{1-\alpha}\phi'(x)
+C_\alpha x\int_0^\infty
\bigl(\phi(x+u)-\phi(x)\bigr)
u^{-1-\alpha}e^{-u}\,\mathrm du,
\end{align}
and the spinal immigration generator is
\begin{align} \label{eq:sgen}
\mathcal L_{\mathrm s}\phi(x)
:=
C_\alpha\int_0^\infty
\bigl(\phi(x+u)-\phi(x)\bigr)
u^{-\alpha}e^{-u}\,\mathrm du,
\end{align}
with
\begin{align*}
C_\alpha:=\frac{\alpha}{\Gamma(2-\alpha)}.
\end{align*}
The stationary distribution of this process is Gamma$(\alpha,1)$. That is, for every $t\in\mathbb R$,
\begin{align*}
\mathbf P(Z_{t}\in\mathrm dz)
=
\frac{z^{\alpha-1}e^{-z}}{\Gamma(\alpha)}\,\mathrm dz.
\end{align*}
\end{thm}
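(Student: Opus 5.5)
We prove convergence of joint Laplace transforms of $(Z^{\varepsilon,T}_{t_1},\dots,Z^{\varepsilon,T}_{t_k})$ by exploiting the branching property, and then identify the limit through its generator.

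The first step is to write the conditional law explicitly. For $0\le s<T$, on the event $\{N_s=n\}$ the explosion time from time $s$ is $s+\min_{i\le n}\tau_i$. Writing $e(t)=E'(t)$ for the density of $\tau$, the conditional density at $T$ is $n\,e(T-s)(1-E(T-s))^{n-1}$. Hence, for $\theta\ge 0$ and $u=T-s$,
\begin{align*}
\mathbf E^T\bigl[e^{-\theta N_{s}}\,\big|\,\mathcal F_{s'}\bigr]\propto \mathbf E\Bigl[e^{-\theta N_s} N_s e(u)(1-E(u))^{N_s-1}\,\Big|\,\mathcal F_{s'}\Bigr],
\end{align*}
and this is a derivative in $x$ of the generating function $F_{u'-u}(x)=\mathbf E[x^{N_{u'-u}}]$, raised to the power $N_{s'}$. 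Iterating over the times $s_j=T-\varepsilon e^{-t_j}$ expresses the joint Laplace transform through compositions of $F$ evaluated near $1$, together with one derivative factor. The derivative factor is precisely the spine, consistent with the spinal decomposition of Section~\ref{sec:foundations}.

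The key analytic input is a scaling limit for $F$ near $1$. Put $1-F_t(1-x)=:G_t(x)$, so that $G$ solves $\partial_t G = -(G-(1-f(1-G)))$, i.e.\ $\partial_t G=G^\alpha L(G)$ modulo sign conventions. We have $E(t)=\lim_{x\downarrow0}G_t(x)$, or equivalently $1-E(t)=F_t(1^-)$. Using Proposition~\ref{prop:smallE} and regular variation, we need to show that for fixed $r>0$ and $\lambda\ge 0$,
\begin{align*}
\frac{1-F_{\varepsilon(1-e^{-r})}\bigl(1-E(\varepsilon)(1+\lambda)\ \text{suitably}\bigr)}{E(\varepsilon e^{-r})}\to \psi_r(\lambda),
\end{align*}
where $\psi_r$ is the explicit solution of the limiting ODE $\partial_t \psi=-\Phi(\psi)$ after the time change. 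Since $E$ is regularly varying of index $1/(1-\alpha)$, the ODE $\dot G=G^\alpha L(G)$ becomes, in the variables $t\mapsto\log$-time and $G/E(\cdot)$, the autonomous ODE associated with the branching mechanism $\Psi(\lambda)=\frac{\lambda}{1-\alpha}-C_\alpha\int_0^\infty(e^{-\lambda u}-1)\dots$. This mechanism can be computed in closed form: it equals $\frac{1}{1-\alpha}\bigl((1+\lambda)^{\alpha}\cdot\dots\bigr)$-type expressions. The rescaled derivative term converges to the immigration Laplace exponent $\Phi(\lambda)=\frac{\alpha}{1-\alpha}\bigl(1-(1+\lambda)^{-(1-\alpha)}\bigr)$, which matches Theorem~\ref{thm:spinethm}(2). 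Uniform convergence on compacts then follows from Potter bounds for $L$ and a Gronwall argument for the ODE.

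With these limits in hand, the joint Laplace transform converges to
\begin{align*}
\exp\Bigl(-\sum \ldots\Bigr),
\end{align*}
namely the CBI formula $\mathbf E[e^{-\lambda Z_t}\mid Z_s=x]=\exp\bigl(-x v_{t-s}(\lambda)-\int_0^{t-s}\Phi(v_r(\lambda))\,\mathrm dr\bigr)$ composed over the time points. It remains to:
\begin{enumerate}
\item check that $\mathcal L$ is the generator of this CBI, by differentiating at $r=0$ and using $\int_0^\infty(e^{-\lambda u}-1)u^{-1-\alpha}e^{-u}\,\mathrm du=\Gamma(-\alpha)\bigl((1+\lambda)^\alpha-1\bigr)$ and its $u^{-\alpha}$ analogue;
\item verify stationarity of Gamma$(\alpha,1)$, i.e.\ that $\int \mathcal L e^{-\lambda x}\,\gamma_\alpha(\mathrm dx)=0$, which reduces to the identity $\alpha\Psi(\lambda)/(1+\lambda)=\Phi(\lambda)$;
\item handle the initial time, showing that the contribution from time $0$ (where $Z=E(T)$) is washed out as $\varepsilon\downarrow0$ because $t_1-t_0\to\infty$ and the CBI forgets its initial condition.
\end{enumerate}

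The main obstacle is the uniform scaling limit of $G$ together with the normalizing density ratio $e(T-s)/e(T)$. Dividing by the density of $\tau$ at $T$ requires both regularity of $e$ at the fixed time $T$ and small-time asymptotics of $e$, namely $e(\varepsilon)\sim \frac{1}{1-\alpha}E(\varepsilon)/\varepsilon$. My approach to the latter is to differentiate the ODE $E'(t)=E(t)^\alpha L(E(t))$-type relation, rather than relying on a monotone density theorem.
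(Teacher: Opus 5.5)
Your plan has a genuine gap on the macroscopic part of the time axis. Under $\mathbf P^T$ the iterated formula starts from the single (spinal) particle at time $0$. Its outermost factor is therefore $\frac{E'(\varepsilon e^{-t_1})}{E'(T)}\,F'_{s_1}(\cdot)$, evaluated at a point $1-O(E(\varepsilon))$, where $s_1=T-\varepsilon e^{-t_1}$ is of order one. Here $E'(\varepsilon e^{-t_1})\to0$ while $F'_{s_1}$ blows up near $1$, and the limit of this $0\cdot\infty$ product is exactly the Gamma$(\alpha,1)$ marginal.

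Your key analytic input only describes $F_t$ near $1$ for lags $t=\varepsilon(1-e^{-r})$ of order $\varepsilon$, so it does not touch this factor. Item (3) cannot substitute for it. Convergence to the CBI transitions holds only for fixed log-time lags, whereas on the real-time window $[0,T-\varepsilon e^{M}]$ the remaining time is of order one and the rescaled dynamics are not those of the CBI. So ``the CBI forgets its initial condition'' cannot be invoked there. A forgetting argument would at least require tightness of $E(\varepsilon e^{M})N_{T-\varepsilon e^{M}}$ under $\mathbf P^T$, uniformly in $\varepsilon$. That again needs control of $F'_{T-\delta}$ near $1$ at macroscopic times, which you do not have.

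The missing idea is one step from what you wrote. $G_t$ solves the same autonomous ODE $\partial_tG=G^\alpha L(G)$ as $E$, whose right-hand side $1-y-f(1-y)$ is smooth on $(0,1]$. With $G_0(x)=x$, uniqueness gives $G_t(x)=E(t+E^{-1}(x))$ for \emph{all} $t\geq0$. Equivalently $F_t(1-E(w))=1-E(t+w)$, and differentiating in $w$ gives $F'_t(1-E(w))=E'(t+w)/E'(w)$; these are \eqref{eq:Esemigroup} and \eqref{eq:Fderivative}. Define $v$ by $e^{-\theta E(\varepsilon)}(1-E(\varepsilon))=1-E(v\varepsilon)$, so that $v\to(1+\theta)^{1-\alpha}$. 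The identities then give the exact formula
\[
\mathbf E^T\bigl[e^{-\theta E(\varepsilon)N_{T-\varepsilon}}\bigr]=e^{-\theta E(\varepsilon)}\,\frac{E'(\varepsilon)}{E'(v\varepsilon)}\,\frac{E'(T+(v-1)\varepsilon)}{E'(T)}\longrightarrow(1+\theta)^{-\alpha},
\]
using only regular variation of $E'$ and continuity of $E'$ at $T$. The same identities turn every short-lag factor into ratios of $E$ and $E'$ at arguments of order $\varepsilon$, so no Potter/Gronwall analysis is needed.

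They also repair a second soft spot. Gronwall applied to $G$ does not by itself control the derivative factor $F'_{\varepsilon(1-e^{-r})}$, which you assert converges without argument; its variational equation involves the asymptotics of $f'(1-y)$. Either the derivative identity or concavity of $x\mapsto G_t(x)$ settles it.

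Finally, two slips. In your scaling display the scales are swapped: the argument should be $1-(1+\lambda)E(\varepsilon e^{-r})$ and the normalisation $E(\varepsilon)$, with limit $1+u_r(\lambda)$. The sign in front of $C_\alpha$ in $\Psi$ is also wrong. With the correct $\Psi(\lambda)=\bigl((1+\lambda)-(1+\lambda)^\alpha\bigr)/(1-\alpha)$ your stationarity identity $\alpha\Psi(\lambda)/(1+\lambda)=\Phi(\lambda)$ holds. Your remaining steps (the generator by differentiation at $r=0$ and the Gamma check) coincide with the paper's.
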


\begin{figure}[ht]
    \centering
    \includegraphics[width=0.8\textwidth]{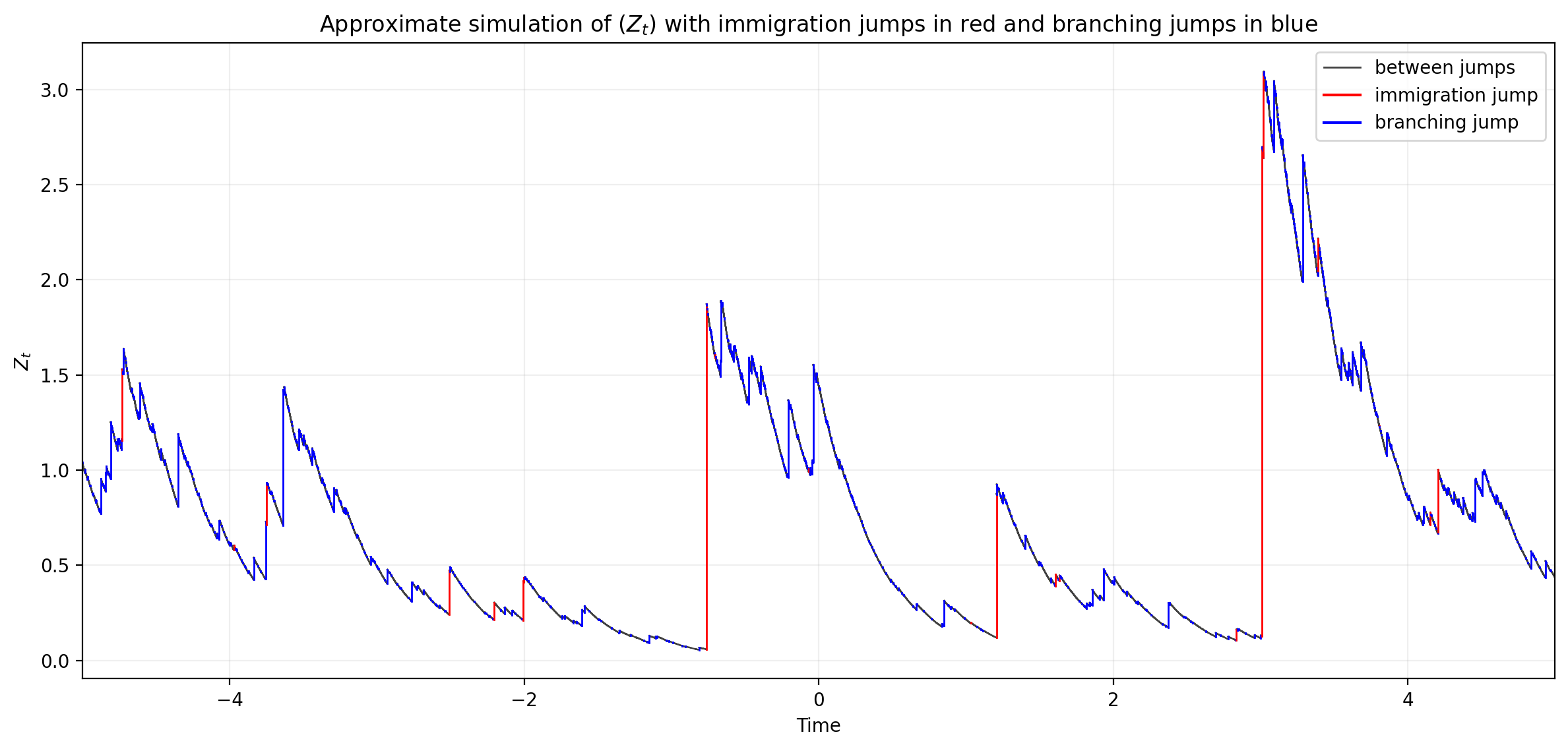}
    \caption{This figure depicts a simulation of the stochastic process $(Z_t)_{t \geq 0}$ in the case $\alpha = 0.7$. The jumps that are due to immigration along the explosive spine are illustrated in red.}
    \label{fig:Zt}
\end{figure}

Several remarks are in order. The process $(Z_t)_{t\geq0}$ is a continuous-state branching process with immigration. More specifically, its immigration and branching decompose as follows: 
\begin{itemize}
\item 
The immigration mechanism corresponds to asymptotic contributions to the population size from births along the spine itself. Its operator $\mathcal L_{\mathrm s}$ is the generator of a compound Poisson immigration process with jumps occurring at rate $\alpha/(1-\alpha)$ and jump sizes having the Gamma$(1-\alpha,1)$ distribution. 
\item
The branching mechanism describes the evolution of the large mass of nonspine particles leading up to explosion. Its
operator $\mathcal L_{\mathrm n}$ is the generator of a subcritical continuous-state branching process $(Y_t)_{t\geq0}$ satisfying
\begin{align*}
\mathbf E_x[e^{-\theta Y_t}]
=e^{-x u_t(\theta)},
\end{align*}
where
\begin{align} \label{eq:ut}
u_t(\theta)
=\left(1+e^{-t}\bigl((1+\theta)^{1-\alpha}-1\bigr)
\right)^{1/(1-\alpha)}-1.
\end{align}
This process is subcritical with unit-exponential decay, that is, $\mathbf E_x[Y_t]=xe^{-t}$.
In particular, $Y_t$ converges to zero almost surely as $t \to \infty$. However, let us note that $(Y_t)_{t \geq 0}$ \textit{extinguishes} (see, e.g.,\ \cite{LYZ}), i.e., it converges almost surely to zero as $t \to \infty$ without ever hitting zero in finite time. To see that it never hits zero, note that $\mathbf{P}_x(Y_t = 0) = \lim_{\theta \to \infty} \mathbf{E}_x[ e^{ - \theta Y_t} ] = 0$ since $\lim_{\theta \to \infty} u_t(\theta) = \infty$. 

\end{itemize}

Let us take a moment to observe that the population $Z_t$ is composed of mass that is descended from immigration at some time in the past, and that this fact encodes an identity in distribution between the immigration events and the stationary distribution. 

For concreteness, consider the population size $Z_0$ at time $0$. Now on the one hand, Theorem \ref{thm:main} says that $Z_0$ is $\mathrm{Gamma}(\alpha,1)$-distributed.
On the other hand, let us denote by $ \cdots < -T_i < \cdots < -T_2 < -T_1 < 0$ the times at which immigration events occurred before time zero. The structure of the immigration mechanism encodes the fact that the $-T_i$ are the points of a Poisson process of rate $\alpha/(1-\alpha)$ on $(-\infty,0]$, and that the sizes $(A_i)$ of the corresponding immigration events at time $-T_i$ are independent and identically distributed with a $\Gamma(1-\alpha,1)$ distribution. Let $B_i$ denote the total mass at time $0$ descended from the immigrants born at time $-T_i$. Then
\begin{align*}
B_i = Y^i_{T_i},
\end{align*}
where $(Y_t^i)_{t \geq 0}$ is an independent copy of $(Y_t)_{t \geq 0}$ starting from $Y_0^i = A_i$, and $(Y_t)_{t \geq 0}$ is the CSBP with Laplace exponent \eqref{eq:ut}. 
See Figure \ref{fig:campbell} for a depiction. 

\begin{figure}[ht]
    \centering
    \includegraphics[width=0.8\textwidth]{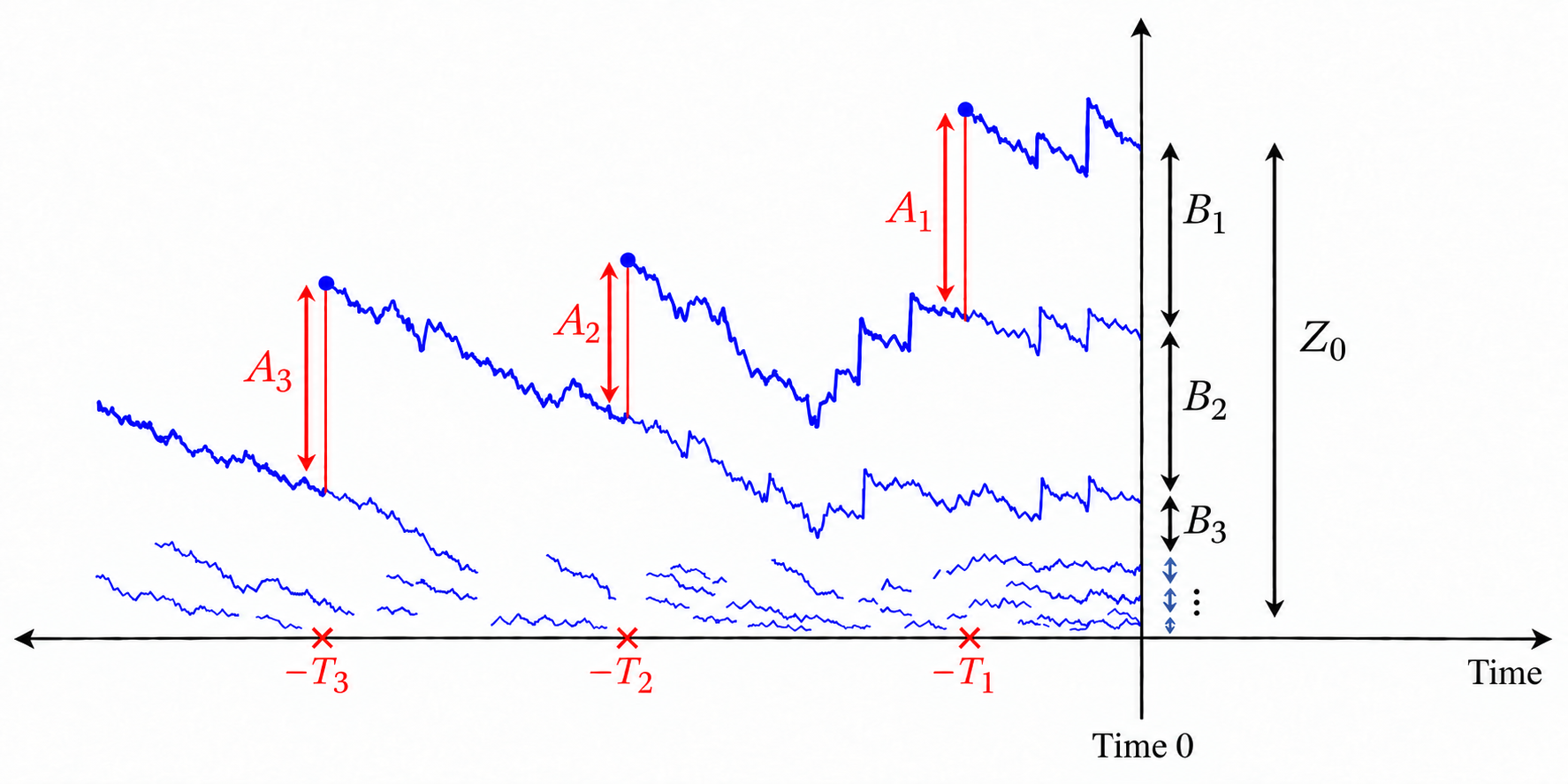}
    \caption{The points $(-T_i)_{i \geq 1}$ are the points of a Poisson process with rate $\alpha/(1-\alpha)$. At each event, mass of size $A_i$ immigrates to the population, where $(A_i)_{i \geq 1}$ are independent and identically distributed with law $\Gamma(1-\alpha,1)$. The ultimate contribution $B_i$ has the law of the time-$T_i$ size of a subcritical CSBP started with mass $A_i$.}
    \label{fig:campbell}
\end{figure}

We note that conditional on $(T_i,A_i)_{i \geq 1}$, the $(B_i)_{i \geq 1}$ are independent. Let us also highlight that since each copy of the subcritical CSBP $(Y_t)_{t \geq 0}$ is almost surely positive at all times $t \geq 0$, each of the $B_i$ is almost surely positive. 

The expected mass at time $0$ descended from the population present at time $-M$ is $\mathbf E[Z_{-M}]e^{-M}=\alpha e^{-M}$. These residual ancestral masses decrease as $M$ increases, so their limit is zero almost surely. Thus all the mass is descended from immigration at a finite time in the past, and we have the almost-sure identity
\begin{align} \label{eq:distid}
\sum_{i \geq 1} B_i = Z_0 \sim \mathrm{Gamma}(\alpha,1). 
\end{align}

The sum is almost surely finite, since
\begin{align*}
\mathbf E\left[\sum_{i\geq1}B_i\right]
=\frac{\alpha}{1-\alpha}\int_0^\infty (1-\alpha)e^{-t}\,\mathrm dt
=\alpha.
\end{align*}

In Section \ref{sec:campbell}, we give a proof of the distributional identity \eqref{eq:distid} directly using Campbell's formula. 

\medskip

Let us close this section by making a remark about our main results. In both Theorem \ref{thm:spinethm} and Theorem \ref{thm:main}, we conditioned on the event $\{ \tau = T \}$ that we have explosion at time $T$, and then proved some sort of convergence in distribution for an object (namely, the births off the spine to explosion and the overall population size) indexed by times of the order $T - O(\varepsilon)$ as $\varepsilon\downarrow 0$. Let us emphasise that in each case, we could have stated the theorem as a convergence in distribution for $\tau - O(\varepsilon)$ approaching the random time $\tau$. However, let us emphasise that as stated these results confirm something stronger: namely that the limiting behaviour of these objects a time $\varepsilon > 0$ before explosion as $\varepsilon \downarrow 0$ is independent of the actual explosion time itself.

\subsection{Time-reversal and the genealogy near explosion}

We now turn to the genealogical question posed in the introduction, which we address by sampling from the limiting CBI.

The direct approach to this question would involve sampling $k$ distinct particles at a time $\varepsilon$ before explosion, and then using spine techniques with exponential discounting to study their joint coalescent structure. More specifically, conditioning on explosion at time $T$ and then sampling $k$ particles at time $T-\varepsilon$ is equivalent to sampling $k$ particles from a population at time $T-\varepsilon$ but according to a process distributed under the change of measure 
\begin{align*}
\frac{\mathrm{d}\mathbb{Q}^{\varepsilon,T}}{\mathrm{d}\mathbb{P}}\Bigg|_{\mathcal{F}_{T-\varepsilon}} = 
\frac{N_{T-\varepsilon} (1-E(\varepsilon))^{N_{T-\varepsilon} - 1}}{\mathbf{E}[ N_{T-\varepsilon} (1-E(\varepsilon))^{N_{T-\varepsilon} - 1}] }.
\end{align*}
To sample $k$ distinct particles, one must further condition on $N_{T-\varepsilon}\geq k$; under $\mathbf P^T$ this event has probability tending to one as $\varepsilon\downarrow0$. The displayed density is interpreted as zero when $N_{T-\varepsilon}=\infty$.
Using a variant of \cite[Lemma 10]{HarrisJohnstonPardo2024}, it is possible to obtain explicit formulas for the genealogical process associated with sampling $k$ particles from the time $T-\varepsilon$ population of a process under $\mathbb{Q}^{\varepsilon,T}$. One can then study these formulas in a subsequent $\varepsilon \to 0$ scaling limit. 

In the present article, however, we will opt for a different, and cleaner, approach. We will sample the particles uniformly at random directly from the limiting process. That is, we take the sample from the limit, rather than the limit of the sample. We do not prove convergence of sampled genealogies from the discrete trees here. We instead use the machinery of genealogy for continuous-state branching processes.
Genealogies of continuous-state branching processes, including continuous-state branching processes with immigration (which we henceforth refer to as CBIs), have been described in several places \cite{Lambert2002,BLG,FoucartMaMallein2019}. 

We now describe the coalescent associated with the particular stationary CBI $(Z_t)_{t\in\mathbb R}$ in Theorem \ref{thm:main}, sampled at time $0$. Sample $k$ particles from the population at time $0$; equivalently, sample $k$ points uniformly from the space interval $[0,Z_0]$. 
We endow this CBI with ancestry by attaching every immigration family to a single immortal spinal lineage, which is not included in the population mass. Ancestors are traced along this spine once their immigration time is reached. We may then construct a coalescent process $(\pi_t)_{t \geq 0}$ (a stochastic process in the space of partitions of $\{1,\ldots,k\}$) 
by declaring 
\begin{align*}
\text{$i$ and $j$ are in the same block of $\pi_t$} \iff \text{same ancestor for $i,j$ in $Z_{-t}$ or on the spine}.
\end{align*}
In fact, we can enrich our understanding of this coalescent process by labelling a block red if it corresponds to a spine, or labelling it blue if it corresponds to a non-spine individual. Namely, we set 
\begin{align} \label{eq:block1}
\text{$i$ and $j$ are in a blue block of $\pi_t$} \iff \text{same ancestor for $i,j$ in $Z_{-t}$},
\end{align}
and
\begin{align} \label{eq:block2}
\text{$i$ and $j$ are in the red block of $\pi_t$} \iff \text{the spine is the time-$(-t)$ ancestor of $i$ and $j$}.
\end{align}

We use the standard branching genealogy, attaching each immigration family to the immortal spine as above; see \cite{Lambert2002,JL}. In this genealogy, at a branching jump each distinct nonspinal ancestral lineage belongs to the newly born mass independently with probability equal to its fraction of the post-jump population. The same sampling rule applies to the immigrant mass at an immigration jump, when participating lineages reach the spine. Our CBI has neither a Brownian component nor continuous immigration, so no additional continuous genealogical mechanism is needed. 
Given $Z$ let us define the time-reversed process
\begin{align*}
(R_t)_{t \in \mathbb{R}} := \text{right-continuous version of }(Z_{-t})_{t \in \mathbb{R}};
\end{align*}
that is, we set $R_t = Z_{(-t)-}$. 
Note that forward in time, $(Z_t)_{t \geq 0}$ has positive jumps. Thus backward in time, $(R_t)_{t \geq 0}$ has negative jumps at which it loses part of its mass.

We now give the following probabilistic construction of the coalescent process $(\pi_t)_{t \geq 0}$ from the reverse time process $(R_t)_{t \geq 0}$:

\begin{definition}[The coalescent process $(\pi_t)_{t\geq0}$] \label{def:coal}
Given the reverse-time population process $R=(R_t)_{t\geq0}$ run from time zero onwards, corresponding to the forward population process $Z$ on $(-\infty,0]$, define a coalescent process $(\pi_t)_{t\geq0}$ whose possible mergers can occur at the discontinuity times of $R$. If $t$ is a discontinuity time of $R$, at which $R_t$ loses a fraction $v_t$ of its total mass, i.e.:
\begin{align*}
v_t := \frac{R_{t-}-R_t}{R_{t-}},
\end{align*}
then, independently for each block of $\pi_{t-}$, mark that block with probability $v_t$. All marked blocks merge to form a single block. If fewer than two blocks are marked, the underlying partition does not change.
\end{definition}


This coalescent process is related to the so-called $\Lambda$-coalescent, which, given a finite measure $\Lambda$ on $[0,1]$, is the coalescent process with the property that when it has $k$ blocks, a particular subset of $j$ blocks merges at rate
\begin{align*}
\lambda_{k,j}
:=
\int_0^1v^{j-2}(1-v)^{k-j}\Lambda(\mathrm{d}v).
\end{align*}
The $\Lambda$-coalescent has the following interpretation: according to a Poisson process with rate $\Lambda(\mathrm{d}v)/v^2$, we generate a merger event with involvement probability $v$, at which every block in the process elects to be involved in a merger with probability $v$, or not involved with probability $1-v$. 


Generally speaking, the coalescent process arising from a CBI has a complicated distribution and does not have a simple time-change structure. It is however possible to describe the coalescent structure in terms of a $\Lambda$-coalescent with a merger measure that changes stochastically in time \cite{JL}.

We find that in our setting, there is a favourable decoupling of the population size with the merger measure, and that the coalescent process associated with the CBI $(Z_t)_{t \in \mathbb{R}}$ is a random time change of a Beta-coalescent. Indeed, we have the following result:

\begin{thm} \label{thm:coal}
Fix $k\geq2$, and let $(\pi_t)_{t\geq0}$ be the coalescent process 
corresponding to $R$ (as in Definition \ref{def:coal})
started from the partition of $\{1,\ldots,k\}$ into singletons. Define the random time change
\begin{align}\label{eq:timechange}
I(t)
:=
\inf \left\{
r \geq 0:
\int_0^r
\Gamma(1+\alpha)R_s^{1-\alpha}\,\mathrm{d}s
\geq t
\right\}.
\end{align}
Then $(\pi_{I(t)})_{t\geq0}$, when viewed in its own filtration, is a standard Beta$(2-\alpha,\alpha)$-coalescent. Equivalently, when the time-changed process has $b$ blocks, each prescribed collection of $j$ blocks merges at rate
\begin{align} \label{eq:lambda}
\lambda_{b,j}
=
\int_0^1
v^{j-2}(1-v)^{b-j}
\mathcal B_{2-\alpha,\alpha}(\mathrm{d}v),
\qquad
2\leq j\leq b,
\end{align}
where
\begin{align*}
 \mathcal B_{2-\alpha,\alpha}(\mathrm{d}v) = \frac{1}{\Gamma(\alpha)\Gamma(2-\alpha)}\,v^{1-\alpha}(1-v)^{\alpha-1} \mathrm{d}v, \qquad v \in [0,1],
\end{align*}
is the Beta distribution with parameters $2-\alpha$ and $\alpha$. 
\end{thm}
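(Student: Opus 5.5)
The plan is to identify the law of the time-reversed process $R$ as a Markov jump process. Its jump kernel should take the form stated in the introduction:
\begin{align*}
\Gamma(1+\alpha)R_{t-}^{1-\alpha}v^{-2}\mathcal B_{2-\alpha,\alpha}(\mathrm dv).
\end{align*}
Here $v$ is the fraction of mass lost. The crucial feature is that the law of $v$ at a jump does not depend on the current state $R_{t-}$; only the total rate scales like $R_{t-}^{1-\alpha}$. Granted this, the theorem follows by a standard conditioning argument.

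\textbf{Step 1: the generator of $R$.} Since $Z$ is stationary with Gamma$(\alpha,1)$ marginal density $\rho(x)=x^{\alpha-1}e^{-x}/\Gamma(\alpha)$, the time reversal $R$ is Markov. Its jump part is obtained by Nagasawa/Hunt duality. A forward jump $y\to x=y+u$ occurs with intensity $n(y,u)\,\mathrm du$, where
\begin{align*}
n(y,u)=C_\alpha\bigl(y\,u^{-1-\alpha}+u^{-\alpha}\bigr)e^{-u},
\end{align*}
the first term coming from branching and the second from immigration. The reversed process therefore jumps from $x$ to $y=x-u$ at rate
\begin{align*}
\frac{\rho(y)}{\rho(x)}\,n(y,u)\,\mathrm du .
\end{align*}
Substitute $u=vx$ and $y=(1-v)x$, so that $\rho(y)/\rho(x)=(1-v)^{\alpha-1}e^{vx}$ and the $e^{\pm vx}$ factors cancel. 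Collecting powers of $x$ and $v$, the rate in the variable $v$ is
\begin{align*}
C_\alpha x^{1-\alpha}(1-v)^{\alpha-1}v^{-1-\alpha}\bigl((1-v)+v\bigr)\mathrm dv
=C_\alpha x^{1-\alpha}v^{-1-\alpha}(1-v)^{\alpha-1}\mathrm dv.
\end{align*}
Since $C_\alpha\Gamma(\alpha)\Gamma(2-\alpha)=\alpha\Gamma(\alpha)=\Gamma(1+\alpha)$, this equals $\Gamma(1+\alpha)x^{1-\alpha}v^{-2}\mathcal B_{2-\alpha,\alpha}(\mathrm dv)$, as desired.

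The same computation gives more than the kernel. Given a reversed jump of size $v$, the two summands $(1-v)$ and $v$ show that it came from branching with probability $1-v$ and from immigration with probability $v$, which matches the spinal statement in the overview.

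The continuous drift of $R$ is irrelevant for the coalescent. Mergers happen only at jumps of $R$, and Definition \ref{def:coal} uses only the fractions $v_t$.

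\textbf{Step 2: rigour for the reversal.} This is the main obstacle, because $Z$ has infinitely many small jumps. My first approach is to verify the duality relation
\begin{align*}
\int \rho\, \psi\, \mathcal L\phi=\int\rho\,\phi\,\hat{\mathcal L}\psi
\end{align*}
on a core of smooth compactly supported functions on $(0,\infty)$. Here $\hat{\mathcal L}$ is the candidate reversed generator, with jump part as above and drift chosen so that the identity holds. This is a routine integration by parts combined with the change of variables above. I would then invoke the standard result that, for a stationary Feller process, the reversed process is Markov with the dual semigroup. As a fallback, I would instead compute the joint law of $(Z_s,\Delta Z_s)$ via the Lévy system of $Z$ and stationarity, and read off the compensator of the jump measure of $R$ in its own filtration.

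\textbf{Step 3: conclusion.} Write $\mu(\mathrm dt,\mathrm dv)$ for the point process of jump fractions of $R$. By Step 1 its compensator is
\begin{align*}
\Gamma(1+\alpha)R_{t}^{1-\alpha}\,\mathrm dt\; v^{-2}\mathcal B_{2-\alpha,\alpha}(\mathrm dv).
\end{align*}
The time change $I$ has derivative $1/(\Gamma(1+\alpha)R^{1-\alpha})$. It is well defined because $R$ is positive and, being stationary, has $\int_0^\infty R_s^{1-\alpha}\,\mathrm ds=\infty$ almost surely. After the time change, the compensator of the point process $\{(t,v_{I(t)})\}$ becomes $\mathrm dt\,v^{-2}\mathcal B_{2-\alpha,\alpha}(\mathrm dv)$, which is deterministic. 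By Watanabe's characterisation, the time-changed jump fractions therefore form a Poisson point process with this intensity.

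Since the marking in Definition \ref{def:coal} is performed independently given $R$, $(\pi_{I(t)})$ is exactly the Poisson construction of the $\Lambda$-coalescent with $\Lambda=\mathcal B_{2-\alpha,\alpha}$. When there are $b$ blocks, a prescribed set of $j$ blocks merges at the rate
\begin{align*}
\int v^{j}(1-v)^{b-j}v^{-2}\Lambda(\mathrm dv)=\lambda_{b,j},
\end{align*}
as in \eqref{eq:lambda}. Because the law of this Poisson process does not depend on $R$, $(\pi_{I(t)})$ is a standard Beta-coalescent in its own filtration.
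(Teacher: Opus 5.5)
Your proposal is correct and follows essentially the same route as the paper. You compute the reversed jump kernel from the stationary balance relation, observe that it factorises as $\Gamma(1+\alpha)R_{t-}^{1-\alpha}$ times the state-independent measure $v^{-2}\mathcal B_{2-\alpha,\alpha}(\mathrm dv)$, and remove the common factor by the time change $I$. Your Steps 2 and 3 (duality on a core, the time-changed compensator with Watanabe's characterisation, and divergence of $\int_0^\infty R_s^{1-\alpha}\,\mathrm ds$) add rigour that the paper omits: there, the balance equation is simply cited and the time change is asserted to remove the common factor.
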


We prove Theorem \ref{thm:coal} by obtaining the transition rates of the time-reversed process, and showing that they have a tractable structure which behaves agreeably with the coalescent behaviour.

In fact, our understanding of the coalescent process is enriched significantly by understanding which discontinuities of $(R_t)_{t \in \mathbb{R}}$ were due to immigration, and which were due to branching.
With \eqref{eq:block1} and \eqref{eq:block2} in mind, we now have the following extension of Definition \ref{def:coal}, which creates a two-type coalescent process in which each block is either blue or red. It is convenient to regard this hidden spinal lineage as an additional red block (starting as $\{0\}$) which is always present, even when its intersection with $\{1,\ldots,k\}$ is empty.
See Figure \ref{fig:redblue}.

\begin{definition}[The two-type coalescent process $(\widetilde\pi_t)_{t\geq0}$]  \label{def:coal2}
Define the two-type coalescent process $(\widetilde\pi_t)_{t\geq0}$ on $\{0,1,\ldots,k\}$ associated with the reverse-time population process $R=(R_t)_{t\geq0}$ as follows. Initially, the sampled blocks $\{1\},\ldots,\{k\}$ are blue, and there is an additional 
red block $\{0\}$ which represents the explosive spine.
Suppose that $t$ is a discontinuity time of $R$ and set
\begin{align*}
v:=1-\frac{R_t}{R_{t-}}
\end{align*}
to represent the proportion of population mass removed at that time.
If the jump was due to immigration, colour it red, and if not, colour it blue. 
\begin{enumerate}
\item[(i)]
At a red jump, mark each blue block independently with probability $v$. 
The marked blue blocks merge together with the red block to form a new red block.
If no blue block is marked, there is no visible change.
\item[(ii)]
At a blue jump, mark each blue block independently with probability $v$. 
Any marked blue blocks merge together to form a new blue block. If
fewer than two are marked, the visible partition does not change. 
The red block is unaffected.
\end{enumerate}
\end{definition}

\begin{figure}[ht]
    \centering
    \includegraphics[width=0.5\textwidth]{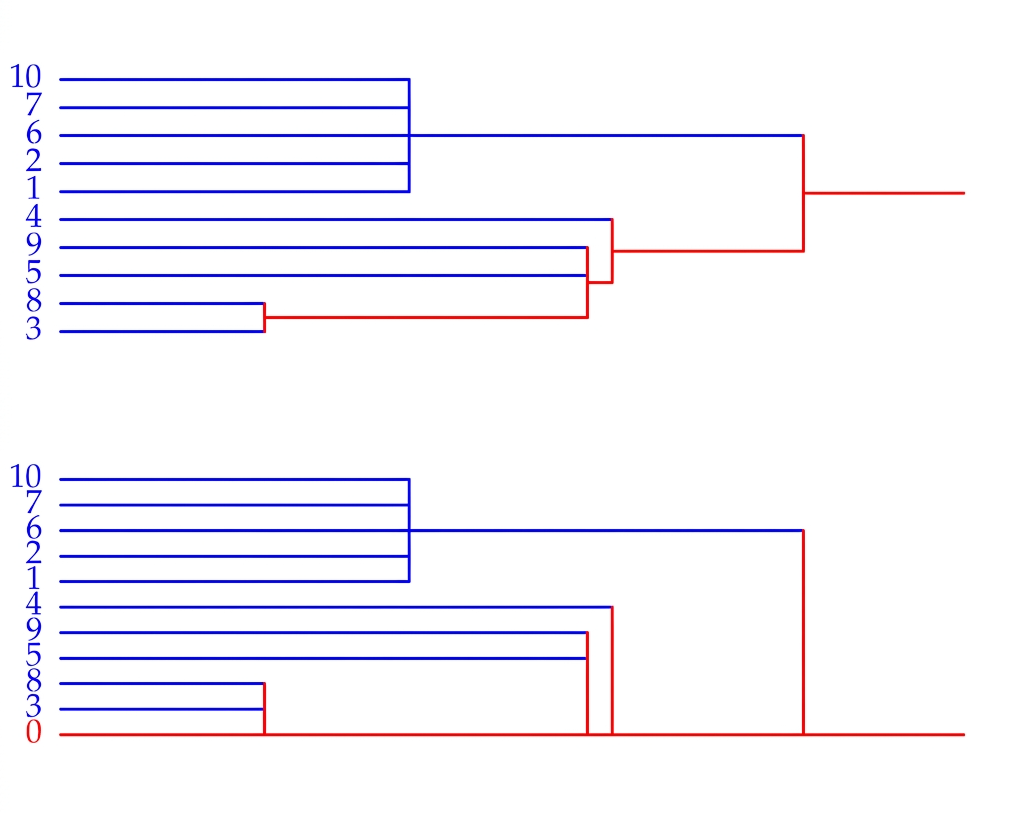}
    \caption{A depiction of the red/blue Beta-coalescent for $k=10$. In the upper panel the spine is hidden, while red sampled ancestry remains visible. In the lower panel the spine is visible.}
    \label{fig:redblue}
\end{figure}

We will realise the coloured coalescent process $(\tilde{\pi}_t)_{t \geq 0}$ as an example of a multitype $\Lambda$-coalescent \cite{JKR}. More specifically, consider the following coalescent process $(\bm \kappa_t)_{t\geq0} := ((\kappa_t,c_t))_{t \geq 0}$ which we call the red/blue coalescent and which is defined as follows. This process begins at time zero with a single red block and some blue blocks. Thereafter, at any moment where there are $b$ blue blocks, any prescribed $j$ of these $b$ blocks merge to form a single new blue block at rate
\begin{align*}
\lambda^{\mathrm{blue}}_{b,j} := \int_0^1 v^{j-2} (1-v)^{b-j} \left\{ (1-v) \mathcal{B}_{2-\alpha,\alpha}(\mathrm{d}v) \right\},\qquad 2\leq j\leq b,
\end{align*}
and any prescribed $j$ of these $b$ blocks merge with the red block to form a new red block at rate
\begin{align*}
\lambda^{\mathrm{red}}_{b,j} := \int_0^1 v^{j-2} (1-v)^{b-j} \left\{ v \mathcal{B}_{2-\alpha,\alpha}(\mathrm{d}v) \right\},\qquad 1\leq j\leq b.
\end{align*}
This red/blue coalescent can be realised as a standard $\mathrm{Beta}(2-\alpha,\alpha)$-coalescent on $\{0,1,\ldots,k\}$, with the block containing $0$ coloured red. Indeed, at an event with parameter $v$, the red block is marked with probability $v$, giving a red merger, and otherwise the marked blue blocks merge without it. Restricting to $\{1,\ldots,k\}$ and ignoring colours therefore gives a standard $\mathrm{Beta}(2-\alpha,\alpha)$-coalescent.

We are now ready to state our final main result, which 
generalises
Theorem \ref{thm:coal}.

\begin{thm} \label{thm:coal2}
Let $(\tilde{\pi}_t)_{t \geq 0}$ be as in Definition \ref{def:coal2} and let $(\bm \kappa_t)_{t \geq 0}$ denote a copy of the red/blue coalescent started from the red singleton $\{0\}$ and the blue singletons $\{1\},\ldots,\{k\}$. 
With the same time-change $I(t)$ as in Theorem \ref{thm:coal}, we have the identity in law
\begin{align*}
(\bm \kappa_t)_{t\geq0} \overset{d}{=} (\widetilde\pi_{I(t)})_{t\geq0}.
\end{align*}
\end{thm}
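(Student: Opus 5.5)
The plan is to compute the jump structure of the time-reversed process $R$ explicitly, then show that the coloured merger rates it induces on $(\widetilde\pi_t)$ factor as a random speed $\Gamma(1+\alpha)R_{t-}^{1-\alpha}$ times a deterministic measure in $v$. The first step is to identify the law of the reversed process. Since $Z$ is stationary with Gamma$(\alpha,1)$ marginal $\mu(\mathrm dz)=z^{\alpha-1}e^{-z}\mathrm dz/\Gamma(\alpha)$, the reversal $R$ is Markov with jump kernel obtained by the detailed-balance (Nelson/Hunt) duality. Let $n(x,\mathrm du)$ be the forward jump measure. Then
\begin{align*}
\hat n(y,\mathrm d(\text{jump to }y-u)) = \frac{\mu(y-u)}{\mu(y)}\, n(y-u,\mathrm du).
\end{align*}
This is done separately for the two forward components: the blue part $C_\alpha x u^{-1-\alpha}e^{-u}$ and the red part $C_\alpha u^{-\alpha}e^{-u}$.

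Next, substitute $u=vy$, where $y=R_{t-}$ and $x=y-u=(1-v)y$. The factors $e^{-u}$ and $e^{-(y-u)}/e^{-y}$ cancel. For a red jump the reversed rate density in $v$ becomes
\begin{align*}
C_\alpha (1-v)^{\alpha-1} (vy)^{-\alpha} y\,\mathrm dv = C_\alpha y^{1-\alpha} v^{-\alpha}(1-v)^{\alpha-1}\mathrm dv .
\end{align*}
For a blue jump we get
\begin{align*}
C_\alpha (1-v)^{\alpha-1}(1-v)y\,(vy)^{-1-\alpha}y\,\mathrm dv = C_\alpha y^{1-\alpha}v^{-1-\alpha}(1-v)^{\alpha}\mathrm dv .
\end{align*}
Since $C_\alpha=\alpha/\Gamma(2-\alpha)$ and $\alpha\Gamma(\alpha)=\Gamma(1+\alpha)$, these equal
\begin{align*}
\Gamma(1+\alpha)y^{1-\alpha}\, v^{-2}\cdot v\,\mathcal B_{2-\alpha,\alpha}(\mathrm dv)
\quad\text{and}\quad
\Gamma(1+\alpha)y^{1-\alpha}\, v^{-2}(1-v)\,\mathcal B_{2-\alpha,\alpha}(\mathrm dv)
\end{align*}
respectively. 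Their sum recovers the uncoloured measure used for Theorem \ref{thm:coal}. The colour of a reversed jump is known because duality is applied to each component separately; equivalently, conditional on a jump of size $v$ the colour is red with probability $v$. The drift $-x\phi'/(1-\alpha)$ reverses to a continuous part of $R$ that causes no mergers.

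Then I compute the rates of $(\widetilde\pi_t)$. Given $R$ and $\widetilde\pi_{t-}$ with $b$ blue blocks, the marking mechanism of Definition \ref{def:coal2} means that a prescribed set of $j$ blue blocks is exactly the marked set at a jump of size $v$ with probability $v^j(1-v)^{b-j}$. Integrating against the red measure above (for $j\ge1$) and the blue measure (for $j\ge2$) gives instantaneous rates $\Gamma(1+\alpha)R_{t-}^{1-\alpha}\lambda^{\mathrm{red}}_{b,j}$ and $\Gamma(1+\alpha)R_{t-}^{1-\alpha}\lambda^{\mathrm{blue}}_{b,j}$. The key point is that the joint process $(R_t,\widetilde\pi_t)$ is Markov, and conditional on the past the coalescent transition rates equal $\Gamma(1+\alpha)R_{t-}^{1-\alpha}$ times fixed rates independent of $R$. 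A random time change by the additive functional in \eqref{eq:timechange} therefore yields a continuous-time Markov chain with rates $\lambda^{\mathrm{red}},\lambda^{\mathrm{blue}}$ in its own filtration. I would make this rigorous with a martingale-problem argument: for $g$ on coloured partitions, $g(\widetilde\pi_t)-\int_0^t\Gamma(1+\alpha)R_s^{1-\alpha}Ag(\widetilde\pi_s)\,\mathrm ds$ is a martingale in the joint filtration, where $A$ is the red/blue generator. Optional time change by $I$ gives a martingale for $A$, and uniqueness for a finite-state chain identifies the law as that of $\bm\kappa$.

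The remaining technical checks are as follows. First, $\int_0^\infty R_s^{1-\alpha}\,\mathrm ds=\infty$ a.s., which follows from stationarity and ergodicity of $R$, so $I(t)<\infty$ for all $t$. Second, the jump rates are finite on blocks, since $v^{j-2}$ with $j\ge2$ and $v^{-1}\cdot v$ are integrable against the Beta law. I expect the main obstacle to be the first step: rigorously justifying the time reversal formula for this CBI, including domain issues for the generator, and confirming that reversal preserves the red/blue labelling of jumps. I would first try verifying the duality $\int \psi\,\mathcal L\phi\,\mathrm d\mu=\int\phi\,\hat{\mathcal L}\psi\,\mathrm d\mu$ on exponential test functions $\phi=e^{-\theta x}$, working with the two-component marked process so that the colours are carried along.
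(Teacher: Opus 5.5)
Your proposal is correct and follows essentially the same route as the paper. Like the paper, you reverse the branching and immigration jump kernels separately via the balance equation against the Gamma$(\alpha,1)$ density, rewrite them in the lost fraction $v$ as $\Gamma(1+\alpha)R_{t-}^{1-\alpha}v^{-2}(1-v)\,\mathcal B_{2-\alpha,\alpha}(\mathrm dv)$ and $\Gamma(1+\alpha)R_{t-}^{1-\alpha}v^{-2}\,v\,\mathcal B_{2-\alpha,\alpha}(\mathrm dv)$, integrate $v^j(1-v)^{b-j}$ against these to obtain $\Gamma(1+\alpha)R_{t-}^{1-\alpha}\lambda^{\mathrm{blue}}_{b,j}$ and $\Gamma(1+\alpha)R_{t-}^{1-\alpha}\lambda^{\mathrm{red}}_{b,j}$, and remove the common factor with the time change $I$; your martingale-problem argument for the time change and your check that $\int_0^\infty R_s^{1-\alpha}\,\mathrm ds=\infty$ almost surely just make explicit steps the paper treats as immediate.
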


That completes the statements of our main results.


\subsection{Related work}

The literature on explosive continuous-time Galton--Watson processes is surprisingly sparse. The classical integral criterion for explosion was studied probabilistically by Schuh \cite{schuh} and Doney \cite{doney}, while Grey \cite{greyexplosion} showed that explosion cannot be characterised solely through the divergence of any prescribed moment of the offspring distribution. Pakes \cite{pakes} subsequently studied entrance laws, extensions and conditional limiting behaviour for explosive Markov branching processes. 

Let us touch on Sagitov's short article \cite{sagitov}, which is particularly relevant to our program: Suppose that, at some time, the branching process has a very large population $N$. Each of those $N$ particles independently has some small chance of generating an explosion soon. The process explodes when the first of those $N$ descendant families explodes. 

More precisely, Proposition 2 of \cite{sagitov} states, in our notation, that for $x\geq0$,
\begin{align*}
\mathbf P\bigl(\tau-t\leq E^{-1}(x/N)\mid N_t=N\bigr)\longrightarrow1-e^{-x}
\qquad\text{as }N\to\infty.
\end{align*}
Thus it is the transformed waiting time $N E(\tau-t)$ that has an asymptotically exponential law. Under our regular-variation assumption, ordinary rescaling by $a_N=E^{-1}(1/N)$ instead gives limiting survival function $\exp\{-x^{1/(1-\alpha)}\}$. Sagitov also identifies the corresponding regular-variation exponent for $E$. Our Gamma limit concerns the different conditioning in which the explosion time is fixed.

Related questions for explosive continuous-state branching processes, including quasi-stationary distributions and the associated (Q)-process, were considered by Labb\'e \cite{labbe}. A broader literature concerns explosion in age-dependent and Crump--Mode--Jagers branching processes, beginning with Grey \cite{greyagedependent}; see also Amini, Devroye, Griffiths and Olver \cite{amini} for the connection with min-summability in heavy-tailed branching random walks. 

More recent work of Iyer and Lodewijks \cite{iyerlodewijks} studies the genealogical trees associated with explosive Crump--Mode--Jagers branching processes, giving criteria for the tree at explosion to contain either an infinite path or a vertex of infinite degree, together with corresponding uniqueness results. Building on this work, Lodewijks \cite{lodewijks} obtains weaker sufficient conditions for the appearance of a unique vertex of infinite degree in explosive Crump--Mode--Jagers processes in random environments, with applications to super-linear preferential-attachment models.

\medskip
The coalescent structure of Galton--Watson trees is also an active field of research. 
Exact descriptions of finite-sample genealogies for uniform samples at a fixed time $T > 0$ for more general single- and multitype branching processes have also been obtained using spine and Poissonisation techniques; see, for example, \cite{Schweinsberg2003,HarrisJohnstonRoberts2020,JL,HarrisJohnstonPardo2024}.
Our setting is slightly unusual in that we study genealogical structure near explosion.

Connections between continuous-state branching processes and multiple-merger coalescents originate in the work of Bertoin and Le Gall \cite{BLG}, who related Neveu's branching process to the Bolthausen--Sznitman coalescent. Birkner et al.\ \cite{BirknerEtAl2005} subsequently showed that the genealogy of an $\alpha$-stable continuous-state branching process, after a random time change whose instantaneous speed is proportional to the $(1-\alpha)$-th power of the population size, is governed by the $\operatorname{Beta}(2-\alpha,\alpha)$-coalescent. Their result includes the non-conservative regime $0<\alpha<1$. Related extensions involving immigration, stationary branching populations, and time-reversed genealogies have been studied in \cite{FoucartHenard2013,AbrahamDelmasHe2021,FoucartMaMallein2019}; see also \cite{BerestyckiBerestyckiSchweinsberg2007} for the connection with stable trees.

\section{Fixed-time properties of the underlying continuous-time Galton--Watson tree} \label{sec:foundations}
\subsection{A word on slowly varying functions}

We begin with a quick word on slowly varying functions. We will refer to a measurable function $L:(0,\infty) \to [0,\infty)$, which is strictly positive in a neighbourhood of zero, as slowly varying at zero (or, for the remainder of the article, simply ``slowly varying''), if for all $c > 0$ we have
\begin{align*}
\lim_{t \downarrow 0} \frac{L(ct)}{L(t)} = 1.
\end{align*}
Further information on slowly varying functions may be found in \cite{BGT}.

\subsection{Generating functions and ODEs}

In this section we study the first properties of continuous-time Galton--Watson trees which give birth at unit rate and have offspring generating function
\begin{align*}
f(s) = \sum_{\ell \geq 0} p_\ell s^\ell
\end{align*}
taking the form
\begin{align} \label{eq:offspring2}
f(s) = s - (1-s)^\alpha L(1-s),
\qquad s \in [0,1),
\end{align}
for some $\alpha \in (0,1)$. We set $f(1)=1$.

Define the process generating function to be
\begin{align*}
F_t(s) := \mathbf{E}[s^{N_t}],
\qquad t \geq 0,\ s \in [0,1),
\end{align*}
where we adopt the convention that $s^\infty=0$ for $s \in [0,1)$. Thus, $F_t$ is the generating function for the total population size $N_t$ at time $t$, having started with a single individual alive at time $0$. By using the tower property, it is possible to show that the process generating function satisfies the semigroup property
\begin{align} \label{eq:semigroup}
F_t(F_{t'}(s)) = F_{t+t'}(s),
\qquad t,t' \geq 0,\ s \in [0,1).
\end{align}
The process generating function satisfies the differential equation
\begin{align} \label{eq:PDE}
\frac{\partial}{\partial t} F_t(s)
=
f(F_t(s)) - F_t(s),
\qquad
F_0(s)=s,
\end{align}
where $f(s)$ is the offspring generating function; see, for example, Athreya and Ney \cite{AN72}. When $f(s)$ takes the form in \eqref{eq:offspring2}, the process generating function has the property that
\begin{align*}
F_t(1-)
:=
\lim_{s \uparrow 1} F_t(s)
=
\mathbf{P}(N_t<\infty)
<1
\end{align*}
for $t>0$. In other words, the fact that explosion is possible is encoded in the differential equation in \eqref{eq:PDE}.

\subsection{The explosion probability $E(t)$}

In this section we look at properties of the explosion probability
\begin{align*}
E(t)
:=
\mathbf{P}(N_t=\infty)
=
1-F_t(1-).
\end{align*}
By taking $s \uparrow 1$ in \eqref{eq:semigroup}, one can show that
\begin{align} \label{eq:Esemigroup}
F_t(1-E(t'))=1-E(t+t').
\end{align}

Differentiating \eqref{eq:Esemigroup} with respect to $t'$ gives the useful identity
\begin{align} \label{eq:Fderivative}
F_t'(1-E(t'))
=
\frac{E'(t+t')}{E'(t')}.
\end{align}

Moreover, using \eqref{eq:PDE}, we see that $E(t)$ satisfies the ODE
\begin{align} \label{eq:EODE0}
 - E'(t)= f(1 -E(t)) - (1 - E(t)),
\qquad
E(0)=0,
\end{align}
which by \eqref{eq:offspring2} may instead be written 
\begin{align} \label{eq:EODE}
E'(t)=E(t)^\alpha L(E(t)),
\qquad
E(0)=0.
\end{align}
The explosion probability $E$ is the solution of this ODE which satisfies $E(t)>0$ for every $t>0$.

 Although we will not use the following proposition on the large-time behaviour of the explosion probability in the sequel, its proof is short, so we record it here:

\begin{proposition}[Large-$t$ behaviour of $E(t)$] \label{prop:largeE}
There is a constant $C\in(0,\infty)$ such that
\begin{align*}
1-E(t)\sim Ce^{-(1-p_1)t}
\qquad \text{as }t\to\infty.
\end{align*}
\end{proposition}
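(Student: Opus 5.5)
Let $G(t):=1-E(t)=\mathbf P(N_t<\infty)$. By \eqref{eq:EODE0}, $G'(t)=f(G(t))-G(t)$ with $G(t)\downarrow 0$ as $t\to\infty$. The last claim holds because $E$ is increasing and bounded, hence converges to a zero of $s\mapsto f(1-s)-(1-s)$. On $(0,1]$ this function is $s^\alpha L(s)>0$ for $s$ small. In fact it is positive on all of $(0,1)$, since $f(u)-u<0$ for $u\in(0,1)$. To see this, note $f(u)-u=\sum_{\ell\ge2}p_\ell(u^\ell-u)-p_0u\cdot 0$ with $p_0=0$, and not all mass sits on $\ell=1$, so $f(u)<u$ for $u\in(0,1)$. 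Hence $E(t)\to1$, i.e.\ $G(t)\to0$.

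Near $G=0$ we linearise. Since $p_0=0$, we have $f(s)=p_1s+s^2h(s)$ with $h(s)=\sum_{\ell\ge2}p_\ell s^{\ell-2}$, which is bounded on $[0,1]$ ($h\le 1$). So
\begin{align*}
\frac{G'(t)}{G(t)}=-(1-p_1)+G(t)h(G(t)).
\end{align*}
Integrating from some fixed $t_0$ gives
\begin{align*}
G(t)=G(t_0)e^{-(1-p_1)(t-t_0)}\exp\Big(\int_{t_0}^t G(r)h(G(r))\,\mathrm dr\Big).
\end{align*}
The plan is to show $\int_{t_0}^\infty G(r)\,\mathrm dr<\infty$. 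Then the last exponential converges monotonically to a finite positive limit, and we may take
\begin{align*}
C=G(t_0)e^{(1-p_1)t_0}\exp\Big(\int_{t_0}^\infty G h(G)\,\mathrm dr\Big)\in(0,\infty).
\end{align*}

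Integrability is the only mildly delicate step. Since $G(t)\to0$ and $h$ is bounded, for any $\delta\in(0,1-p_1)$ there is $t_0$ with $G(r)h(G(r))\le\delta$ for $r\ge t_0$. The same displayed identity then gives $G(t)\le G(t_0)e^{-(1-p_1-\delta)(t-t_0)}$, which is integrable. We need $1-p_1>0$, which holds because $p_1=1$ would make $f(s)=s$, contradicting the form \eqref{eq:offspring2}. Positivity of $C$ follows since $G(t_0)>0$; note $G(t)>0$ for all $t$ because $p_0=0$ allows no extinction and, for instance, $\mathbf P(\text{no death by }t)=e^{-t}>0$.
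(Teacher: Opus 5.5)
Your proof is correct and follows the paper's argument. Like the paper, you write $1-E$ as a solution of $v'=f(v)-v$, use $f(s)=p_1s+O(s^2)$ to get $v'/v=-(1-p_1)+O(v)$, deduce exponential decay and hence integrability of $v$, and conclude that $\log v(t)+(1-p_1)t$ converges. You also justify $E(t)\to1$, $p_1<1$ and $1-E(t)>0$, which the paper leaves implicit. There is one harmless sign slip: $f(1-s)-(1-s)=-s^\alpha L(s)$ is negative, not positive, on $(0,1)$, but your argument only uses that it does not vanish there.
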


\begin{proof}
Write $v(t)=1-E(t)$. Then by \eqref{eq:EODE0} we have
\begin{align*}
v'(t)=f(v(t))-v(t).
\end{align*}
Since $f(s)=p_1s+O(s^2)$ as $s\downarrow0$ and $v(t)\downarrow0$ when $t\to\infty$, it follows that
\begin{align*}
\frac{v'(t)}{v(t)}
=
-(1-p_1)+O(v(t)).
\end{align*}

In particular, $v(t)$ decays at least exponentially fast for all sufficiently large $t$, and hence $\int_0^\infty v(t)\,\mathrm{d}t<\infty$. It follows that
\begin{align*}
\log v(t)+(1-p_1)t
\end{align*}
converges to a finite constant, and therefore $v(t)\sim Ce^{-(1-p_1)t}$ for some $C\in(0,\infty)$.

\end{proof}

\begin{proposition}[Small-$t$ behaviour of $E(t)$] \label{prop:smallE}
There exist slowly varying functions $L_1,L_2$ at zero such that
\begin{align} \label{eq:Eform}
E(t)
=
t^{\frac{1}{1-\alpha}}L_1(t),
\end{align}
and
\begin{align} \label{eq:E'form}
E'(t)
=
t^{\frac{\alpha}{1-\alpha}}L_2(t).
\end{align}
Moreover,
\begin{align} \label{eq:Ederivativeratio}
\lim_{t\downarrow0}
\frac{tE'(t)}{E(t)}
=
\frac{1}{1-\alpha}.
\end{align}
In particular, $E$ is regularly varying at zero with index $1/(1-\alpha)$, while $E'$ is regularly varying at zero with index $\alpha/(1-\alpha)$.
\end{proposition}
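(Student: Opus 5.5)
The starting point is the ODE \eqref{eq:EODE}, $E'(t)=E(t)^\alpha L(E(t))$ with $E(0)=0$ and $E(t)>0$ for $t>0$. Separating variables expresses $t$ as a function of $E$. Define, for small $x>0$,
\begin{align*}
G(x):=\int_0^x \frac{\mathrm{d}y}{y^\alpha L(y)} ,
\end{align*}
so that $G(E(t))=t$, i.e.\ $E=G^{-1}$ near zero. The integral converges because $y^{-\alpha}/L(y)$ is regularly varying at zero with index $-\alpha>-1$; note also that $1/L$ is slowly varying. We may restrict to a neighbourhood of $0$ where $L>0$ and $L$ is locally bounded away from $0$ and $\infty$. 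This is what the uniform convergence theorem gives after possibly modifying $L$ away from zero, which does not affect asymptotics.

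By Karamata's theorem at zero,
\begin{align*}
G(x)\sim \frac{x^{1-\alpha}}{(1-\alpha)L(x)}\qquad (x\downarrow 0),
\end{align*}
so $G$ is regularly varying at zero with index $1-\alpha>0$. It is also continuous and strictly increasing. By the standard inversion theorem for regularly varying functions (Bingham--Goldie--Teugels, Theorem 1.5.12, transported to zero via $x\mapsto 1/x$), the inverse $E=G^{-1}$ is regularly varying at zero with index $1/(1-\alpha)$. Setting $L_1(t):=E(t)t^{-1/(1-\alpha)}$ then gives \eqref{eq:Eform}.

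Next, the ratio \eqref{eq:Ederivativeratio} follows directly. Since $t=G(E(t))$,
\begin{align*}
\frac{tE'(t)}{E(t)}=\frac{G(E(t))\,E(t)^\alpha L(E(t))}{E(t)}=\frac{G(x)\,x^{\alpha-1}L(x)}{1}\Big|_{x=E(t)} ,
\end{align*}
and by Karamata's asymptotic this tends to $1/(1-\alpha)$ as $x=E(t)\downarrow0$. Then \eqref{eq:E'form} follows, since $E'(t)=\frac{E(t)}{t}\cdot\frac{tE'(t)}{E(t)}$ is the product of a function regularly varying with index $1/(1-\alpha)-1=\alpha/(1-\alpha)$ and a function converging to a positive constant. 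Hence $L_2(t):=t^{-\alpha/(1-\alpha)}E'(t)$ is slowly varying. Alternatively, $E'=E^\alpha L(E)$ is a composition of regularly varying functions, which is again regularly varying with the product index.

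The main obstacle is bookkeeping rather than substance: we need $L$ measurable and locally bounded near zero so that Karamata's theorem and the inversion theorem apply at $0$ rather than at $\infty$. I would handle this by working with $\tilde L(y)=L(1/y)$ at infinity, i.e.\ substituting $y=1/z$ in $G$, and quoting the results there. A second point to address is the identification of $E$ with $G^{-1}$. The ODE $E'=E^\alpha L(E)$ is non-Lipschitz at zero, so the zero solution is also a solution and uniqueness fails. However, on $\{E>0\}$ separation of variables is valid, and the stated positivity of $E$ on $(0,\infty)$ together with $E(0)=0$ and continuity pins down $G(E(t))=t$ exactly: integrate from $\delta$ to $t$, then let $\delta\downarrow0$ using $G(E(\delta))\to0$.
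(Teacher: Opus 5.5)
Your proposal is correct and follows essentially the same route as the paper: separation of variables, Karamata's theorem applied to $\int_0^x y^{-\alpha}L(y)^{-1}\,\mathrm{d}y$, and the inversion theorem for regularly varying functions. You then evaluate the ratio $tE'(t)/E(t)$ directly from Karamata's asymptotic, and you obtain the regular variation of $E'$ either from that ratio or, as the paper does, from $E'=E^\alpha L(E)$. Your extra care in justifying $G(E(t))=t$ despite non-uniqueness of the ODE at zero, and over local boundedness of $L$, fills in details the paper leaves implicit.
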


\begin{proof}
Recall from \eqref{eq:EODE} that
\begin{align*}
E'(t)
=
E(t)^\alpha L(E(t)).
\end{align*}
Separating variables gives
\begin{align*}
t
=
\int_0^{E(t)}
\frac{1}{y^\alpha L(y)}\,\mathrm{d}y.
\end{align*}
By Karamata's theorem, see \cite{BGT}, there is a slowly varying function $\widetilde L$ satisfying
\begin{align*}
\widetilde L(z)L(z)\to1
\qquad\text{as }z\downarrow0,
\end{align*}
such that
\begin{align*}
\int_0^z
\frac{1}{y^\alpha L(y)}\,\mathrm{d}y
\sim
\frac{1}{1-\alpha}
z^{1-\alpha}\widetilde L(z).
\end{align*}
Consequently,
\begin{align*}
t
\sim
\frac{1}{1-\alpha}
E(t)^{1-\alpha}\widetilde L(E(t)).
\end{align*}
The inversion theorem for regularly varying functions, again see \cite{BGT}, therefore implies that
\begin{align*}
E(t)
=
t^{\frac{1}{1-\alpha}}L_1(t)
\end{align*}
for some slowly varying $L_1$.

Using \eqref{eq:EODE},
\begin{align*}
E'(t)=E(t)^\alpha L(E(t)).
\end{align*}
Since $E$ is regularly varying at zero with index $1/(1-\alpha)$ and $L$ is slowly varying, $L(E(t))$ is slowly varying at zero. Hence
\begin{align*}
E'(t)
=
t^{\frac{\alpha}{1-\alpha}}L_2(t)
\end{align*}
for some slowly varying function $L_2$. Moreover, using the preceding asymptotic relation together with $\widetilde L(z)L(z)\to1$,
\begin{align*}
\frac{tE'(t)}{E(t)}
=
tE(t)^{\alpha-1}L(E(t))
\longrightarrow
\frac{1}{1-\alpha}.
\end{align*}
Equivalently,
\begin{align*}
\frac{L_2(t)}{L_1(t)}
\longrightarrow
\frac{1}{1-\alpha},
\end{align*}
which proves the result.

\end{proof}

\subsection{Conditioning on explosion to occur at time $T$} \label{sec:conditioning}

We now consider conditioning on explosion at a time $T$. All conditional distributions given $\{\tau=T\}$ below are understood in terms of a regular conditional distribution with respect to the density $E'(T)$ of the explosion time. For each $T>0$, we use the version specified on $\mathcal F_t$, $t<T$, by the density $N_t(1-E(T-t))^{N_t-1}E'(T-t)/E'(T)$, interpreted as zero when $N_t=\infty$. The semigroup identity makes these restrictions consistent.
The conditional behaviour of the process admits a particularly tractable description thanks to the branching property. Namely, by the branching property, for $0\leq t<T$ we have
\begin{align*}
\mathbf{P}(\tau>T\mid N_t)
=
\mathbf{P}(N_T<\infty\mid N_t)
=
(1-E(T-t))^{N_t}.
\end{align*}
In particular, by differentiating with respect to $T$, we see that the conditional density of the explosion time given a finite population size $N_t$ at time $t$ is
\begin{align*}
\mathbf{P}(\tau\in\mathrm{d}T\mid N_t)
&=
-\frac{\partial}{\partial T}
\mathbf{P}(\tau>T\mid N_t)\,\mathrm{d}T
\\
&=
E'(T-t)N_t
(1-E(T-t))^{N_t-1}\,\mathrm{d}T.
\end{align*}
It follows that
\begin{align} \label{eq:follows}
\mathbf{E}[s^{N_t} \mid\tau=T]
&=
\frac{1}{E'(T)}
\mathbf{E}\left[
s^{N_t}E'(T-t)N_t
(1-E(T-t))^{N_t-1}
\right] \nonumber 
\\
&=
\frac{sE'(T-t)}{E'(T)}
F_t'\bigl(s(1-E(T-t))\bigr).
\end{align}
More generally, let $0 \leq t \leq t' < T$. Then
\begin{align} \label{eq:polar0}
\mathbf{E} \left[ s^{N_{t'}} \big| N_t=n,\tau=T \right]
= \frac{ \mathbf{E} \left[ s^{N_{t'}} \mathrm{1}\{ \tau \in \mathrm{d}T \} \big| N_t=n \right]}{\mathbf{P} \left( \tau \in \mathrm{d}T \big| N_t=n \right)}.
\end{align}
Now the denominator above takes the form \begin{align} \label{eq:polar1}
\mathbf{P} \left( \tau \in \mathrm{d}T \big| N_t=n \right) = n (1-E(T-t))^{n-1}E'(T-t) \mathrm{d}T.
\end{align} 
As for the numerator, using the tower property to obtain the first equality below, then using \eqref{eq:polar1} with $t'$ in place of $t$ to obtain the second, we have 
\begin{align} \label{eq:polar2}
\mathbf{E} \left[ s^{N_{t'}} \mathrm{1}\{ \tau \in \mathrm{d}T \} \big| N_t=n \right] &= \mathbf{E} \left[ s^{N_{t'}} \mathbf{P}( \tau \in \mathrm{d}T | N_{t'} ) \big| N_t=n \right] \nonumber \\
&= E'(T-t')\mathbf{E} \left[ s^{N_{t'}} N_{t'}(1-E(T-t'))^{N_{t'}-1}  \big| N_t=n \right]\mathrm{d}T.
\end{align}
Now $\mathbf{E}[ N_{t'} v^{N_{t'}-1} | N_t = n ] = \frac{\partial}{\partial v} \mathbf{E}[ v^{N_{t'}} | N_t = n ] = \frac{\partial}{\partial v} F_{t'-t}(v)^n = n F_{t'-t}'(v) F_{t'-t}(v)^{n-1}$. In particular, setting $z = s(1-E(T-t'))$ in \eqref{eq:polar2} we obtain
\begin{align} \label{eq:polar3}
\mathbf{E} \left[ s^{N_{t'}} \mathrm{1}\{ \tau \in \mathrm{d}T \} \big| N_t=n \right] &= n s E'(T-t') F'_{t'-t}( z ) F_{t'-t} ( z )^{n-1}\mathrm{d}T.
\end{align}
Using \eqref{eq:polar3} and \eqref{eq:polar1} in \eqref{eq:polar0} we obtain
\begin{align} \label{eq:conditionaltransition}
&\mathbf{E}\left[
s^{N_{t'}}
\,\middle|\, N_t=n,\tau=T
\right] =
\frac{sE'(T-t')}{E'(T-t)}
F_{t'-t}'\left(z\right)
\left(
\frac{
F_{t'-t}\left(z\right)
}{
1-E(T-t)
}
\right)^{n-1},
\end{align}
where $z = z_{s,T-t'} := s(1 - E(T-t'))$.


Setting $t=T-\varepsilon$ in \eqref{eq:follows} we have
\begin{align} \label{eq:pregen}
\mathbf{E}\left[
s^{N_{T-\varepsilon}}
\,\middle|\, \tau=T
\right]
=
\frac{sE'(\varepsilon)}{E'(T)}
F_{T-\varepsilon}'\bigl(s(1-E(\varepsilon))\bigr).
\end{align}

At this point we can already study the asymptotic behaviour of the population size at a fixed time approaching explosion. We now prove the following result.

\begin{proposition} \label{prop:size}
For every $x\geq0$, we have
\begin{align*}
\lim_{\varepsilon\downarrow0}
\mathbf{P}\left(
E(\varepsilon)N_{T-\varepsilon}\leq x
\mid \tau=T
\right)
=
\frac{1}{\Gamma(\alpha)} 
\int_0^x u^{\alpha-1}e^{-u}\,\mathrm{d}u.
\end{align*}
\end{proposition}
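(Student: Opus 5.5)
The plan is to compute the conditional Laplace transform of $E(\varepsilon)N_{T-\varepsilon}$ from \eqref{eq:pregen} and show that it converges to $(1+\theta)^{-\alpha}$, the Laplace transform of $\mathrm{Gamma}(\alpha,1)$. The continuity theorem for Laplace transforms of nonnegative random variables then gives convergence of distribution functions at every $x\geq0$, since the limit law is continuous. A point to check first is that $N_{T-\varepsilon}<\infty$ almost surely under $\mathbf P^T$; this holds because the density defining the conditional law vanishes on $\{N_{T-\varepsilon}=\infty\}$.

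Fix $\theta\geq0$ and put $s=s_\varepsilon:=e^{-\theta E(\varepsilon)}$ in \eqref{eq:pregen}. The key trick is to write the argument of $F'_{T-\varepsilon}$ in the form $1-E(\varepsilon')$, so that \eqref{eq:Fderivative} applies. Since $E$ is continuous, strictly increasing near $0$ and $E(0)=0$, we may define $\varepsilon'=\varepsilon'(\varepsilon,\theta)$ by
\begin{align*}
E(\varepsilon') = 1-e^{-\theta E(\varepsilon)}(1-E(\varepsilon)).
\end{align*}
Then $E(\varepsilon')/E(\varepsilon)\to1+\theta$ and $\varepsilon'\downarrow0$ as $\varepsilon\downarrow0$. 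Applying \eqref{eq:Fderivative} with $t=T-\varepsilon$ and $t'=\varepsilon'$ gives
\begin{align*}
\mathbf E\bigl[e^{-\theta E(\varepsilon)N_{T-\varepsilon}}\,\big|\,\tau=T\bigr]
= s_\varepsilon\,\frac{E'(\varepsilon)}{E'(\varepsilon')}\cdot\frac{E'(T-\varepsilon+\varepsilon')}{E'(T)}.
\end{align*}
The first factor tends to $1$. The last factor tends to $1$ because, by \eqref{eq:EODE}, $E'$ is continuous and positive at $T>0$.

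For the middle ratio, I would avoid inverting $E$. Instead use the ODE \eqref{eq:EODE} directly:
\begin{align*}
\frac{E'(\varepsilon)}{E'(\varepsilon')}=\left(\frac{E(\varepsilon)}{E(\varepsilon')}\right)^{\alpha}\frac{L(E(\varepsilon))}{L(E(\varepsilon'))}.
\end{align*}
The power term tends to $(1+\theta)^{-\alpha}$. For the second term, write $E(\varepsilon')=c_\varepsilon E(\varepsilon)$ with $c_\varepsilon\to1+\theta$, and invoke the uniform convergence theorem for slowly varying functions (see \cite{BGT}), valid uniformly for $c$ in compact subsets of $(0,\infty)$. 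This gives $L(c_\varepsilon y)/L(y)\to1$ as $y=E(\varepsilon)\downarrow0$.

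Combining the three factors, the Laplace transform converges to $(1+\theta)^{-\alpha}$, which completes the argument. The only delicate point is this slowly varying step, where $c_\varepsilon$ varies with $\varepsilon$; the uniform convergence theorem handles it. The case $\theta=0$ is trivial. An alternative would be to use Proposition \ref{prop:smallE} together with inversion to get $\varepsilon'/\varepsilon\to(1+\theta)^{1-\alpha}$, and then regular variation of $E'$ with index $\alpha/(1-\alpha)$. This yields the same limit but needs an extra uniformity argument, so I would use the route above.
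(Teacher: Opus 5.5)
Your proposal is correct and follows the paper's proof: you make the same choice $s_\varepsilon=e^{-\theta E(\varepsilon)}$, the same reparametrisation of the argument as $1-E(\varepsilon')$ (the paper's $\varepsilon'=v_{\theta,\varepsilon}\varepsilon$), and the same three-factor factorisation via \eqref{eq:Fderivative}. The only difference is the middle ratio $E'(\varepsilon)/E'(\varepsilon')$: you handle it directly through the ODE \eqref{eq:EODE} and the uniform convergence theorem for $L$, whereas the paper inverts $E$ to get $v_{\theta,\varepsilon}\to(1+\theta)^{1-\alpha}$ and then uses regular variation of $E'$; your version is slightly leaner but reaches the same limit.
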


\begin{proof}
By standard results on convergence in distribution, it is sufficient to establish that, for all $\theta>0$,
\begin{align} \label{eq:LaplaceGoal}
\lim_{\varepsilon\downarrow0}
\mathbf{E}\left[
e^{-\theta E(\varepsilon)N_{T-\varepsilon}}
\mid \tau=T
\right]
=
(1+\theta)^{-\alpha}.
\end{align}

Let
\begin{align*}
s_\varepsilon
:=
e^{-\theta E(\varepsilon)}.
\end{align*}
Then, with the use of \eqref{eq:pregen} in mind, note that
\begin{align}
s_\varepsilon(1-E(\varepsilon))
=
1-(1+\theta)E(\varepsilon)
+O(E(\varepsilon)^2).
\end{align}
Since $E$ is continuous and strictly increasing and regularly varying at $0$, we  choose $v_{\theta,\varepsilon}>0$ such that
\[
E(v_{\theta,\varepsilon}\varepsilon)=(1+\theta)E(\varepsilon)
+O(E(\varepsilon)^2).
\]
In view of the expansion above, we specify this choice more precisely by

\begin{align} \label{eq:defv}
s_\varepsilon(1-E(\varepsilon))
=
1-E(v_{\theta,\varepsilon}\varepsilon).
\end{align}
It follows that
\begin{align*}
\frac{E(v_{\theta,\varepsilon}\varepsilon)}
{E(\varepsilon)}
= \frac{1-s_\varepsilon(1-E(\varepsilon))
}{E(\varepsilon)}
=
1+\theta+O(E(\varepsilon)).
\end{align*}
Since $E$ is regularly varying at zero with index $1/(1-\alpha)$, the uniform convergence and inversion properties of regularly varying functions imply that
\begin{align} \label{eq:vlimit}
v_{\theta,\varepsilon}
\longrightarrow
(1+\theta)^{1-\alpha}.
\end{align}

Using \eqref{eq:pregen} and \eqref{eq:defv}, we obtain
\begin{align*}
&\mathbf{E}\left[
e^{-\theta E(\varepsilon)N_{T-\varepsilon}}
\mid \tau=T
\right]
=
\frac{s_\varepsilon E'(\varepsilon)}{E'(T)}
F_{T-\varepsilon}'
\left(
1-E(v_{\theta,\varepsilon}\varepsilon)
\right).
\end{align*}
Applying \eqref{eq:Fderivative} with
$t = T- \varepsilon$ and $t'=v_{\theta,\varepsilon}\varepsilon$
gives
\begin{align*}
F_{T-\varepsilon}'
\left(
1-E(v_{\theta,\varepsilon}\varepsilon)
\right)
=
\frac{
E'\left(
T+(v_{\theta,\varepsilon}-1)\varepsilon
\right)
}{
E'(v_{\theta,\varepsilon}\varepsilon)
}.
\end{align*}
Consequently,
\begin{align} \label{eq:LaplaceFactorisation}
&\mathbf{E}\left[
e^{-\theta E(\varepsilon)N_{T-\varepsilon}}
\mid \tau=T
\right]=
s_\varepsilon
\frac{E'(\varepsilon)}
{E'(v_{\theta,\varepsilon}\varepsilon)}
\frac{E'\left(
T+(v_{\theta,\varepsilon}-1)\varepsilon
\right)}{E'(T)}.
\end{align}

As $\varepsilon\downarrow0$, we have $s_\varepsilon\to1$. Moreover, by \eqref{eq:vlimit} and the continuity of $E'$ at $T>0$,
\begin{align*}
\frac{
E'\left(
T+(v_{\theta,\varepsilon}-1)\varepsilon
\right)
}{
E'(T)
}
\longrightarrow1.
\end{align*}
Finally, by \eqref{eq:E'form}, the function $E'$ is regularly varying at zero with index $\alpha/(1-\alpha)$. Hence, using \eqref{eq:vlimit},
\begin{align*}
\frac{E'(\varepsilon)}
{E'(v_{\theta,\varepsilon}\varepsilon)}
\longrightarrow
\left((1+\theta)^{1-\alpha}\right)^{-\alpha/(1-\alpha)}
=
(1+\theta)^{-\alpha}.
\end{align*}
Substituting these limits into \eqref{eq:LaplaceFactorisation} proves \eqref{eq:LaplaceGoal}. Since $(1+\theta)^{-\alpha}$ is the Laplace transform of a Gamma random variable with shape parameter $\alpha$ and rate parameter $1$, the result follows.
\end{proof}

\subsection{The spine to explosion}

In this section we introduce the spine to explosion $(\xi_t)_{t \in [0,\tau)}$. We make this definition precise using the Ulam--Harris labelling. Let
\begin{align*}
\mathbb{T}
:=
\{\varnothing\}
\cup
\bigcup_{n \geq 1}\mathbb{N}^n,
\end{align*}
where $\varnothing$ denotes the empty word. We associate every individual in a continuous-time Galton--Watson tree starting at time zero with a label in $\mathbb{T}$ as follows. The initial individual has label $\varnothing$. This individual has a standard exponential lifetime $\pi_\varnothing$, and at the end of its lifetime dies and is replaced by $\ell_\varnothing$ children. These children are labelled by the one-letter words $(1),\ldots,(\ell_\varnothing)$. More generally, if an individual $w=(w_1,\ldots,w_k)$ has $\ell_w$ children upon death, then these children are labelled by the concatenated words $
w1,\ldots,w\ell_w.$
Write $v<w$ if $v$ is a strict ancestor of $w$. Thus, if $w=(w_1,\ldots,w_k)$, then $
v=(w_1,\ldots,w_j)$
for some $0\leq j<k$, with the understanding that $j=0$ refers to the empty word. Write $v\leq w$ if either $v<w$ or $v=w$.

Only labels admitted recursively by the offspring numbers correspond to individuals in the realised tree. For all other labels we set $b_w=d_w=\infty$. For an individual $w$ in the realised tree, its birth and death times are given by
\begin{align*}
b_w
:=
\sum_{v<w}\pi_v
\qquad\text{and}\qquad
d_w
:=
\sum_{v\leq w}\pi_v,
\end{align*}
respectively. The set of individuals alive at time $t$ is
\begin{align*}
\mathcal{N}_t
:=
\{w\in\mathbb{T}:b_w\leq t<d_w\},
\end{align*}
and the number of individuals alive at time $t$ is
\begin{align*}
N_t
:=
\#\mathcal{N}_t.
\end{align*}
The Ulam--Harris labelling makes sense regardless of whether $N_t$ is finite or infinite.

More generally, given an individual $w$ born at time $b_w$, for $t\geq b_w$ let
\begin{align*}
N_t^w
&:=
\text{number of time-$t$ descendants of $w$}
\\
&=
\#\{v\in\mathbb{T}:v\geq w,\ b_v\leq t<d_v\}.
\end{align*}
We define the explosion time of the descendant tree rooted at $w$ by
\begin{align*}
\tau_w
:=
\inf\{t\geq b_w:N_t^w=\infty\}.
\end{align*}
In particular,
\begin{align*}
\tau_\varnothing=\tau
\end{align*}
is the explosion time of the entire process.

We let $(\mathcal{F}_t)_{t\geq0}$ denote the natural filtration generated by all births and deaths up to time $t$, together with the offspring numbers revealed at death. More precisely, setting the birth and death times of labels which do not occur in the tree equal to infinity, we may write
\begin{align*}
\mathcal{F}_t
:=
\sigma\left(
b_w\wedge t,\,
d_w\wedge t,\,
\ell_w\mathbf{1}_{\{d_w\leq t\}}
:
w\in\mathbb{T}
\right).
\end{align*}

Suppose that $t<\tau<\infty$. Since $\mathcal{N}_t$ is then finite, we may define
\begin{align} \label{eq:defspine}
\xi_t
:=
\operatorname*{arg\,min}_{w\in\mathcal{N}_t}\tau_w
\end{align}
to be the individual alive at time $t$ that is destined to have infinitely many descendants before any other individual at time $t$. More specifically, conditional on $\mathcal{F}_t$, the future descendant trees rooted at the individuals in $\mathcal{N}_t$ are independent. Moreover, by the memoryless property of the exponential distribution, for each $w\in\mathcal{N}_t$ the random variable $\tau_w-t$ has the same law as the explosion time of a process started from one individual. Its distribution restricted to finite times has the continuous density $E'$. It follows that, on the event $\{\tau<\infty\}$, the minimum in \eqref{eq:defspine} is attained by a unique individual almost surely. We may break ties lexicographically on the null event that a tie occurs.

The choices $(\xi_t)_{0\leq t<\tau}$ are genealogically consistent. Indeed, if
\begin{align*}
0\leq s<t<\tau,
\end{align*}
then
\begin{align*}
\xi_s\leq\xi_t.
\end{align*}
To see this, observe that every individual alive at time $t$ is descended from a unique individual alive at time $s$. Since the descendant tree rooted at $\xi_s$ is the first of the time-$s$ descendant trees to explode, the first time-$t$ descendant tree to explode must be contained within it. Thus $(\xi_t)_{0\leq t<\tau}$ traces a single infinite ancestral line, which we call the \emph{spine to explosion}.

We define the enriched filtration by
\begin{align*}
\mathcal{F}_t^1
:=
\sigma\left(
\mathcal{F}_t,\,
(\xi_s)_{0\leq s\leq t}
\right),
\end{align*}
on the event $\{t<\tau<\infty\}$. This filtration observes the Galton--Watson tree up to time $t$, together with the identity of the individual lying on the first explosive lineage.

We are particularly interested in conditioning on the event $\{\tau=T\}$. As before, this is understood through a regular conditional distribution, which we denote by
\begin{align*}
\mathbf{P}^T(\,\cdot\,)
:=
\mathbf{P}(\,\cdot\mid\tau=T).
\end{align*}
Under $\mathbf{P}^T$, the process admits a description as a time-inhomogeneous two-type branching process. The individual $\xi_t$ is said to have type $1$, or to be a spine individual, while every individual in $\mathcal{N}_t\setminus\{\xi_t\}$ has type $0$, or is a nonspine individual.

For $\varepsilon>0$, write
\begin{align*}
v(\varepsilon)
:=
1-E(\varepsilon).
\end{align*}
The offspring law of a nonspine individual with remaining time $\varepsilon$ before explosion is the exponentially tilted law
\begin{align} \label{eq:nonspineoffspring}
p_k^{(0)}(\varepsilon)
:=
\frac{p_kv(\varepsilon)^k}{f(v(\varepsilon))},
\qquad k\geq0.
\end{align}
Thus the original offspring law is tilted by the factor $v(\varepsilon)^k$, favouring smaller families. Its generating function is
\begin{align} \label{eq:nonspinepgf}
h_\varepsilon(s)
:=
\sum_{k\geq0}p_k^{(0)}(\varepsilon)s^k
=
\frac{f(v(\varepsilon)s)}{f(v(\varepsilon))}.
\end{align}

The corresponding offspring law of a spine individual is
\begin{align} \label{eq:spineoffspring}
p_k^{(1)}(\varepsilon)
:=
\frac{kp_kv(\varepsilon)^{k-1}}{f'(v(\varepsilon))},
\qquad k\geq1.
\end{align}
This is both size-biased by the factor $k$ and exponentially tilted by the factor $v(\varepsilon)^{k-1}$. Its generating function is
\begin{align} \label{eq:spinepgf}
g_\varepsilon(s)
:=
\sum_{k\geq1}p_k^{(1)}(\varepsilon)s^k
=
\frac{s f'(v(\varepsilon)s)}{f'(v(\varepsilon))}.
\end{align}
The choice of offspring distributions for nonspine and spine individuals is formalised in the following result.
\begin{proposition} \label{prop:spinedecomposition}
Fix $T>0$ and let $0\leq t<T$. Set
\begin{align*}
\varepsilon:=T-t.
\end{align*}
Conditional on $\mathcal{F}_t^1$ under $\mathbf{P}^T$, the future evolution of the process is described as follows.
\begin{enumerate}
\item Each nonspine individual branches at rate
\begin{align} \label{eq:nonspinerate}
r_0(\varepsilon)
:=
\frac{f(v(\varepsilon))}{v(\varepsilon)}.
\end{align}
Conditional on branching, its offspring number has law $(p_k^{(0)}(\varepsilon))_{k\geq0}$ given by \eqref{eq:nonspineoffspring}. All of its children are nonspine individuals.

\item The unique spine individual branches at rate
\begin{align} \label{eq:spinerate}
r_1(\varepsilon)
:=
f'(v(\varepsilon)).
\end{align}
Conditional on branching, its offspring number has law $(p_k^{(1)}(\varepsilon))_{k\geq1}$ given by \eqref{eq:spineoffspring}. Conditional on there being $k$ children, one of the $k$ children is chosen uniformly to continue the spine. The remaining $k-1$ children are nonspine individuals.

\item Conditional on the branching times, offspring numbers and identity of the new spine child, the descendant trees rooted at the resulting children are independent. The new spine descendant tree is conditioned to explode at time $T$, while every nonspine descendant tree is conditioned not to explode before time $T$.
\end{enumerate}
In particular, under $\mathbf{P}^T$ the population is a time-inhomogeneous two-type branching process, whose rates and offspring laws at time $t$ depend on $t$ only through the remaining time $\varepsilon=T-t$.
\end{proposition}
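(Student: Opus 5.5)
The plan is to compute the law of the enriched process under $\mathbf P^T$ through an $h$-transform. We first show that the density defining the conditional law splits into individual contributions indexed by the particles alive at time $t$. We then read off rates and offspring laws by examining what happens over an infinitesimal time interval $[t,t+\mathrm dt]$.

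\textbf{Step 1: the joint law of the spine identity.} Fix $t<T$ and $\varepsilon=T-t$. Conditionally on $\mathcal F_t$, the subtrees rooted at $w\in\mathcal N_t$ are independent, and each explosion time $\tau_w-t$ has the law of $\tau$. For a fixed $w\in\mathcal N_t$, the event $\{\xi_t=w,\tau\in\mathrm dT\}$ is therefore
\begin{align*}
\{\tau_w-t\in \mathrm d\varepsilon\}\cap\{\tau_u-t>\varepsilon \ \text{for all } u\neq w\},
\end{align*}
and it has conditional probability $E'(\varepsilon)(1-E(\varepsilon))^{N_t-1}\,\mathrm dT$. Summing over $w$ recovers the density $N_t(1-E(\varepsilon))^{N_t-1}E'(\varepsilon)/E'(T)$ fixed in Section~\ref{sec:conditioning}. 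It follows that, under $\mathbf P^T$ and given $\mathcal F_t$, the spine $\xi_t$ is uniform on $\mathcal N_t$. It also follows that, given $\mathcal F^1_t$, the subtrees are independent: the one rooted at $\xi_t$ is conditioned on $\tau_{\xi_t}=T$, and the others are conditioned on $\tau_u>T$. This gives part (3) at time $t$. The claim for children created later follows by applying the same argument at the later time, using the consistency $\xi_s\le\xi_t$ and the Markov property.

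\textbf{Step 2: the nonspine rates.} Consider a single tree conditioned on $\{\tau>T\}$, which has probability $v(\varepsilon)=1-E(\varepsilon)$. Its root branches in $[t,t+\mathrm dt]$ with $k$ children with unconditional probability $p_k\,\mathrm dt$. Given that event, the probability of no explosion before $T$ is $v(\varepsilon-\mathrm dt)^k=v(\varepsilon)^k(1+O(\mathrm dt))$. By Bayes' formula, the conditional rate of a $k$-birth is $p_kv(\varepsilon)^k/v(\varepsilon)$. Summing over $k$ gives the total rate $f(v(\varepsilon))/v(\varepsilon)=r_0(\varepsilon)$, and the jump law is $p^{(0)}_k(\varepsilon)$. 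The conditioning on the children follows from the independence in Step 1, so each child is again a tree conditioned not to explode before $T$.

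\textbf{Step 3: the spine rates.} Now consider a single tree conditioned on $\tau\in\mathrm dT$, which has density $E'(\varepsilon)$. A $k$-birth in $[t,t+\mathrm dt]$ followed by explosion at $T$ through child $j$ has density
\begin{align*}
p_k\,\mathrm dt\,E'(\varepsilon)\,v(\varepsilon)^{k-1}(1+O(\mathrm dt)).
\end{align*}
Summing over $j$ and $k$ gives the total rate $\sum_k kp_kv(\varepsilon)^{k-1}=f'(v(\varepsilon))$ and the jump law $p^{(1)}_k(\varepsilon)$. Given $k$, the weight does not depend on $j$, so the new spine child is uniform among the $k$ children. As a sanity check, no further normalisation is needed, because the ODE~\eqref{eq:EODE0} differentiated along time is consistent with the no-branching survival factor. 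Concretely, the probability of no branching on $[t,t+\mathrm dt]$ is $1-\mathrm dt$, multiplied by $E'(\varepsilon-\mathrm dt)/E'(\varepsilon)$. We should verify that the resulting total outflow rate agrees, that is, $-E''/E' = f'(v)-1$. This identity follows by differentiating $E'=v-f(v)$. The analogous identity for the nonspine rate is $-v'/v=f(v)/v-1$.

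\textbf{Main obstacle.} The main difficulty is rigour rather than computation. The infinitesimal Bayes arguments must be made precise within the regular conditional distribution, and they must be made uniform on the event $N_t<\infty$. I would handle this by writing the Radon--Nikodym densities on $\mathcal F^1_{t'}$ for $t'<T$, namely
\begin{align*}
\mathbf 1\{\xi_{t'}=w\}\,E'(T-t')\,v(T-t')^{N_{t'}-1}/E'(T).
\end{align*}
These densities form a product of per-particle martingale factors. I would then check the martingale property via the ODE identities above. The standard theory of change of measure for branching Markov chains (as in a spine decomposition) then identifies the generator, and hence the rates and offspring laws in parts (1)--(3).
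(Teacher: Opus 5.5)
Your proposal is correct and follows essentially the same route as the paper. The paper's proof is exactly the infinitesimal Bayes computation of your Steps 2 and 3: it uses the branching and Markov properties to reduce to a single tree, conditioned on $N_\varepsilon<\infty$ for a nonspine particle, or carrying the explosion density $kE'(\varepsilon)v(\varepsilon)^{k-1}$ for the spine. Your Step 1 and your argument for the uniform choice of the spine child make explicit what the paper leaves implicit.

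There is one sign slip in your optional sanity check. Differentiating $E'=v-f(v)$ with $v'=-E'$ gives $E''/E'=f'(v)-1$ and $v'/v=f(v)/v-1$, without the minus signs you wrote. These are exactly the identities that make the no-branching factors $(1-\mathrm dt)E'(\varepsilon-\mathrm dt)/E'(\varepsilon)$ and $(1-\mathrm dt)v(\varepsilon-\mathrm dt)/v(\varepsilon)$ produce outflow rates $f'(v)$ and $f(v)/v$.
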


\begin{proof}
Fix $0\leq t<T$ and write $\varepsilon=T-t$. In the unlabelled process, with knowledge of $\mathcal{F}_t$ but not $\mathcal{F}^1_t$, a particle gives birth at unit rate and with offspring probabilities $(p_k)_{k \geq 0}$. The probability this particle does not have infinitely many  descendants at a time $\varepsilon$ in the future is $v(\varepsilon) = 1 - E(\varepsilon)$. In particular,  it follows that, under the conditional law, if $w$ is a nonspine particle at time $t$, then  the Markov property and the branching property give
\begin{align*}
&\mathbf{P}^T\left(
\text{$w$ branches during $(t,t+\delta]$
and has $k$ children} |
\mathcal{F}_t^1
\right)\\
&= \frac{ \mathbf{P}\left(
\text{$\varnothing$ branches during $(0,0+\delta]$
and has $k$ children} , N_\varepsilon < \infty  \right)}{ \mathbf{P}( N_\varepsilon < \infty)} \\
&=
\delta\frac{p_kv(\varepsilon)^k}{v(\varepsilon)}
+o(\delta).
\end{align*}
Summing over $k\geq0$ shows that the total branching rate of a nonspine individual is
\begin{align*}
\frac{1}{v(\varepsilon)}
\sum_{k\geq0}p_kv(\varepsilon)^k
=
\frac{f(v(\varepsilon))}{v(\varepsilon)}.
\end{align*}
Conditional on a branching event, its offspring law is therefore $p_kv(\varepsilon)^k/f(v(\varepsilon))$,
which proves the first part of the proposition.

We next consider the spine individual $\xi_t$. Its descendant tree is conditioned to explode at the precise time $T$. The density of this event, measured from time $t$, is $E'(\varepsilon)$. More generally, given that there are $k$ individuals alive at time $t$, the density of the event that explosion happens precisely at  time $T$ is given by $kE'(\varepsilon)v(\varepsilon)^{k-1}$. It follows by a similar argument that 
\[
\begin{split}
\mathbf{P}^T\left(
\text{$\xi_t$ branches during $(t,t+\delta]$
and has $k$ children} |~
\mathcal{F}_t^1
\right) &= 
\delta\frac{ kp_kE'(\varepsilon) v(\varepsilon)^{k-1}}{E'(\varepsilon)}
+o(\delta)\\
&=
\delta kp_kv(\varepsilon)^{k-1}+o(\delta).
\end{split}
\]
Summing over $k\geq1$ shows that the spine branches at total rate
\begin{align*}
\sum_{k\geq1}kp_kv(\varepsilon)^{k-1}
=
f'(v(\varepsilon)).
\end{align*}
Conditional on a spine branching event, the offspring law is therefore $kp_kv(\varepsilon)^{k-1}/f'(v(\varepsilon))$,
thereby completing the proof.
\end{proof}

\section{Proof of Theorem \ref{thm:spinethm}: The spine as we approach explosion} \label{sec:A}

According to Proposition \ref{prop:spinedecomposition}, under $\mathbf{P}^T$, at time $t=T-\varepsilon$ and conditional on $\mathcal{F}^1_t$, the spine particle gives birth at rate $f'(v(\varepsilon))$, and the offspring generating function at such a birth is
\begin{align*}
g_\varepsilon(s)
=
\frac{s f'(v(\varepsilon)s)}{f'(v(\varepsilon))}.
\end{align*}
We now study the small-$\varepsilon$ asymptotics of these quantities, with a view to proving Theorem \ref{thm:spinethm}.

\begin{proof}[Proof of Theorem \ref{thm:spinethm}]
First, we claim that
\begin{align} \label{eq:spinerateasymptotic}
\lim_{\varepsilon\downarrow0}
\varepsilon f'(v(\varepsilon))
=
\frac{\alpha}{1-\alpha}.
\end{align}
To see this, observe that $1-f(1-x)
=
x+x^\alpha L(x)
\sim
x^\alpha L(x)$ 
as $x\downarrow0$. Since $f'$ is non-decreasing, the monotone density theorem for regularly varying functions gives
\begin{align} \label{eq:fprimeRV}
f'(1-x)
\sim
\alpha x^{\alpha-1}L(x)
\end{align}
as $x\downarrow0$. On the other hand,  \eqref{eq:EODE} states that $E'(\varepsilon)
=
E(\varepsilon)^\alpha L(E(\varepsilon)),
$ and \eqref{eq:Ederivativeratio} states that $\lim_{\varepsilon\downarrow0}
\frac{\varepsilon E'(\varepsilon)}{E(\varepsilon)}
=
\frac{1}{1-\alpha}.
$
Since $1-v(\varepsilon)=E(\varepsilon)$, it follows from
\eqref{eq:fprimeRV} and \eqref{eq:Ederivativeratio} that
\begin{align*}
\varepsilon f'(v(\varepsilon))
\sim
\alpha\varepsilon
E(\varepsilon)^{\alpha-1}L(E(\varepsilon))
=
\alpha\frac{\varepsilon E'(\varepsilon)}{E(\varepsilon)}
\longrightarrow
\frac{\alpha}{1-\alpha},
\end{align*}
which proves \eqref{eq:spinerateasymptotic}.

More generally, by the uniform convergence theorem for regularly varying functions, for $q$ in compact subsets of $(0,\infty)$ we have the uniform convergence 
\begin{align} \label{eq:spinerateq}
\varepsilon f'(v(q\varepsilon))
\longrightarrow
\frac{\alpha}{1-\alpha}\frac{1}{q}.
\end{align}
Since the spinal event rate is deterministic and depends only on the remaining time, the spinal event times form an inhomogeneous Poisson process. Integrating \eqref{eq:spinerateq} over $q\in[s,t]$ proves the first part of Theorem \ref{thm:spinethm}.

We turn to studying the size of the spine births as we approach explosion. Let $K_\varepsilon$ denote the size of a spine birth at a time $\varepsilon$ before explosion. By the definition of the spine offspring generating function,
\begin{align}
\mathbf{E}\left[
e^{-\theta E(\varepsilon)K_\varepsilon}
\right]
=
e^{-\theta E(\varepsilon)}
\frac{
f'\left(v(\varepsilon)e^{-\theta E(\varepsilon)}\right)
}{
f'(v(\varepsilon))
}.
\label{eq:spinemarkLT}
\end{align}
Now, 
\begin{align} \label{eq:jar}
1-v(\varepsilon)e^{-\theta E(\varepsilon)}
= 
1-(1-E(\varepsilon))e^{-\theta E(\varepsilon)}= 
(1+\theta)E(\varepsilon)
+O(E(\varepsilon)^2).
\end{align}
By \eqref{eq:fprimeRV} and \eqref{eq:jar} we have 
\[
\begin{split}
\lim_{\varepsilon \downarrow 0} \mathbf{E}\left[
e^{-\theta E(\varepsilon)K_\varepsilon}
\right]
&=\lim_{\varepsilon \downarrow 0} 
e^{-\theta E(\varepsilon)}
\frac{
f'\left(v(\varepsilon)e^{-\theta E(\varepsilon)}\right)
}{
f'(v(\varepsilon))
} =(1+\theta)^{\alpha-1},
\end{split}
\]
completing the proof.

\end{proof}

\section{Proof of Theorem \ref{thm:main}: The population size as we approach explosion} \label{sec:B}
In this section we prove Theorem \ref{thm:main} and undertake further calculations which will help shed light on the result. Recall that conditioned on the explosion of the continuous-time Galton--Watson tree at time $\tau = T$, we define the stochastic process $Z^{\varepsilon,T} := (Z^{\varepsilon,T}_t)_{t \in \mathbb{R}}$ by setting
\begin{align*}
Z^{\varepsilon,T}_t := E(\varepsilon e^{-t}) N_{T-\varepsilon e^{-t}}, \qquad t \geq - \log(T/\varepsilon),
\end{align*}
and setting $Z^{\varepsilon,T}_t = E(T)$ for all $t < - \log(T/\varepsilon)$. 

\subsection{The limiting transition probabilities}
Our next result calculates the limiting transition Laplace transform of this process.

\begin{proposition} \label{prop:transitionlaplace}
Regardless of the value of $T > 0$, as $\varepsilon \downarrow 0$, $Z^{\varepsilon,T}$  converges in finite-dimensional distributions to a Markov process $(Z_t)_{t \in \mathbb{R}}$ whose transition semigroup is characterised by 
\begin{align} \label{eq:transitionlaplace0}
\mathbf{E}\left[
\exp\left\{
-\theta Z_t
\right\}
\,\middle|\,
Z_s = x
\right]=
\exp\left\{
-xu_{t-s}(\theta)
\right\}
\left(
\frac{1+u_{t-s}(\theta)}
{1+\theta}
\right)^\alpha,
\end{align}
where
\begin{align} \label{eq:defutheta}
u_r(\theta)
:=
\left(
1-e^{-r}
+
e^{-r}(1+\theta)^{1-\alpha}
\right)^{\frac{1}{1-\alpha}}-1,
\qquad r\geq0.
\end{align}
\end{proposition}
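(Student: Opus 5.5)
Under $\mathbf P^T$ the process $(N_t)_{t<T}$ is Markov, since the density $N_t(1-E(T-t))^{N_t-1}E'(T-t)/E'(T)$ depends only on $N_t$. Hence $Z^{\varepsilon,T}$ is Markov for each $\varepsilon$. The plan is to compute the limit of its one-step transition Laplace transform from \eqref{eq:conditionaltransition}, locally uniformly in the starting point. Combined with the one-dimensional convergence of Proposition \ref{prop:size}, this gives finite-dimensional convergence by induction on the number of time points. The induction uses the standard fact that if $\mu_\varepsilon\Rightarrow\mu$ and the kernels $K_\varepsilon(x,\cdot)\Rightarrow K(x,\cdot)$ uniformly for $x$ in compacts of $(0,\infty)$, with $K$ weakly continuous in $x$, then $\mu_\varepsilon K_\varepsilon\Rightarrow\mu K$. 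The limit law $\mathrm{Gamma}(\alpha,1)$ puts no mass at $0$ or $\infty$, so tightness is not an issue.

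Fix $s<t$ and write $a=\varepsilon e^{-s}$ and $b=\varepsilon e^{-t}$, so that $b<a$ are the remaining times and $t'-t_{\rm old}=a-b$. In \eqref{eq:conditionaltransition} take $n=x/E(a)$ and $\sigma=e^{-\theta E(b)}$. Choose $w$ by
\begin{align*}
\sigma(1-E(b))=1-E(wb),
\end{align*}
exactly as in \eqref{eq:defv}. The argument there gives $w\to(1+\theta)^{1-\alpha}$. Then the semigroup identity \eqref{eq:Esemigroup} gives
\begin{align*}
F_{a-b}(1-E(wb))=1-E(a-b+wb),
\end{align*}
and \eqref{eq:Fderivative} gives
\begin{align*}
F'_{a-b}(1-E(wb))=\frac{E'(a-b+wb)}{E'(wb)}.
\end{align*}
Substituting, the transition Laplace transform becomes
\begin{align*}
\sigma\,\frac{E'(b)}{E'(a)}\,\frac{E'(a-b+wb)}{E'(wb)}\left(\frac{1-E(a+(w-1)b)}{1-E(a)}\right)^{x/E(a)-1}.
\end{align*}

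The factors are now evaluated using regular variation (Proposition \ref{prop:smallE}), with $b/a=e^{-(t-s)}=:\rho$. The prefactor tends to
\begin{align*}
\frac{E'(b)}{E'(wb)}\cdot\frac{E'(a(1+(w-1)\rho))}{E'(a)}\to\bigl(1+\rho((1+\theta)^{1-\alpha}-1)\bigr)^{\alpha/(1-\alpha)}(1+\theta)^{-\alpha}.
\end{align*}
Since $1+\rho((1+\theta)^{1-\alpha}-1)=(1+u_{t-s}(\theta))^{1-\alpha}$, this equals $\bigl((1+u)/(1+\theta)\bigr)^{\alpha}$. The power term is $\exp\{-x(E(a(1+(w-1)\rho))-E(a))/E(a)+o(1)\}$, because all the $E$ values are small. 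The ratio $E(a(1+(w-1)\rho))/E(a)$ tends to $(1+\rho((1+\theta)^{1-\alpha}-1))^{1/(1-\alpha)}=1+u_{t-s}(\theta)$, which produces $e^{-xu_{t-s}(\theta)}$, uniformly for $x$ in compacts.

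The main obstacle is bookkeeping rather than ideas. First, $n$ must be an integer, so $x$ should be taken on the lattice $E(a)\mathbb{Z}$; the uniform convergence on compacts handles this. Second, one must justify the uniformity of the regular-variation limits (the uniform convergence theorem) when the arguments involve $w=w_\varepsilon$. The remaining checks are straightforward. The limiting kernels are consistent (Chapman–Kolmogorov), which follows from the semigroup identity $u_r\circ u_{r'}=u_{r+r'}$ together with the multiplicative form of the immigration factor. The limit kernel is weakly continuous in $x$. The construction is independent of $T$, since $T$ enters only through $E'(T)$, which cancels.
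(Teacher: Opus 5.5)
Your proposal is correct and follows essentially the same route as the paper. Like the paper, you reduce finite-dimensional convergence to Proposition \ref{prop:size} plus convergence of the one-step transition Laplace transforms obtained from \eqref{eq:conditionaltransition}. You reparametrise $\sigma(1-E(b))=1-E(wb)$, which is the paper's $v_{t,\theta,\varepsilon}\varepsilon$ with $v_{t,\theta,\varepsilon}=e^{-(t-s)}w$, collapse $F$ and $F'$ via \eqref{eq:Esemigroup} and \eqref{eq:Fderivative}, and finish with regular variation of $E$ and $E'$. The differences are cosmetic: you work with general $s<t$ rather than shifting to $s=0$, and you phrase the kernel convergence as locally uniform in $x$, which is equivalent to the paper's convergence along $x_\varepsilon\to x$. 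You also spell out the induction and the Chapman--Kolmogorov consistency that the paper leaves implicit.
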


\begin{proof}

By Proposition \ref{prop:size}, the one-dimensional marginal at any fixed logarithmic time converges to the Gamma$(\alpha,1)$ law. Since, for each $\varepsilon>0$, $Z^{\varepsilon,T}$ is a time-inhomogeneous Markov process, it is therefore sufficient for finite-dimensional convergence to establish convergence of the transition Laplace transforms. Namely, it is sufficient to show that for all $s<t$ in $\mathbb R$, whenever $(n_\varepsilon)_{\varepsilon>0}$ is a sequence of positive integers such that $x_\varepsilon:=E(\varepsilon e^{-s})n_\varepsilon\to x$, we have

\begin{align} \label{eq:transitionlaplace}
&\lim_{\varepsilon\downarrow0}
\mathbf{E}^T\left[
\exp\left\{
-\theta Z^{\varepsilon,T}_t
\right\}
\,\middle|\,
Z^{\varepsilon,T}_s = x_\varepsilon
\right]=
\exp\left\{
-xu_{t-s}(\theta)
\right\}
\left(
\frac{1+u_{t-s}(\theta)}
{1+\theta}
\right)^\alpha,
\end{align}

Note that $Z_t^{\varepsilon e^{-t_0},T} = Z^{\varepsilon, T}_{t +  t_0}$. It follows that to prove the result, we may further assume without loss of generality that $s = 0$ and $0 < t$. 

Now replacing $t = T-\varepsilon$ and $t'=T-\varepsilon e^{-t}$ in \eqref{eq:conditionaltransition} we obtain 
\begin{align}
&\mathbf{E}^T\left[
s^{N_{T-\varepsilon e^{-t}}} \,\middle|\, N_{T-\varepsilon}=n\right]=
\frac{sE'(\varepsilon e^{-t})}{E'(\varepsilon)}
F_{\varepsilon(1-e^{-t})}'(\tilde{s})
\left(
\frac{
F_{\varepsilon(1-e^{-t})}(\tilde{s})
}{
1-E(\varepsilon)
}
\right)^{n-1}, 
\end{align}
where we are using the shorthand $\tilde{s} = s (1 - E(\varepsilon e^{-t}) )$. 
By setting $s_\varepsilon = e^{ - \theta E(\varepsilon e^{-t}) }$ we obtain
\begin{align} \label{eq:nz0}
\mathbf{E}^T\left[
\exp\left\{
-\theta Z^{\varepsilon,T}_t
\right\}
\,\middle|\,
Z^{\varepsilon,T}_{0} = x_\varepsilon
\right]
= \frac{s_\varepsilon E'(\varepsilon e^{-t})}{E'(\varepsilon)}
F_{\varepsilon(1-e^{-t})}'(\tilde{s}_\varepsilon)
\left(
\frac{
F_{\varepsilon(1-e^{-t})}(\tilde{s}_\varepsilon)
}{
1-E(\varepsilon)
}
\right)^{n-1}.
\end{align}
In this case,
\begin{align*}
\tilde{s}_\varepsilon = 1 - (\theta+1) E(\varepsilon e^{-t}) + O(E(\varepsilon)^2).
\end{align*}
As in \eqref{eq:defv}, we choose $v_{t,\theta,\varepsilon}>0$ such that
\begin{align} \label{eq:nicer}
\tilde{s}_\varepsilon = 1 - E(v_{t,\theta,\varepsilon}\varepsilon).
\end{align}
It then follows that
\begin{align*}
\frac{E(v_{t,\theta,\varepsilon}\varepsilon)}
{E(\varepsilon)}
= \frac{1-\tilde{s}_\varepsilon
}{E(\varepsilon)}
=
(\theta+1) \frac{E(\varepsilon e^{-t})}{E(\varepsilon)} +O(E(\varepsilon)).
    \end{align*}
Thus, proceeding as in \eqref{eq:vlimit}, we have 
\begin{align*}
v_{t,\theta,\varepsilon} \to v_{t,\theta} := e^{-t}(\theta+1)^{1-\alpha} \qquad \text{as $\varepsilon \downarrow 0$}.
\end{align*}
Now using \eqref{eq:Esemigroup} and \eqref{eq:nicer} we have
\begin{align} \label{eq:nz1}
F_{\varepsilon(1-e^{-t})}(\tilde{s}_\varepsilon) = 1 - E(\varepsilon (1 - e^{-t} + v_{t,\theta,\varepsilon} ))
\end{align}
and by \eqref{eq:Fderivative} and \eqref{eq:nicer} we have
\begin{align} \label{eq:nz2}
F'_{\varepsilon(1-e^{-t})}(\tilde{s}_\varepsilon) = \frac{E'(\varepsilon (1 - e^{-t} + v_{t,\theta,\varepsilon} ))}{E'(\varepsilon v_{t,\theta,\varepsilon} )}.
\end{align}
Using \eqref{eq:nz1} and \eqref{eq:nz2} in \eqref{eq:nz0} we obtain
\begin{align} \label{eq:nz3}
&\mathbf{E}^T\left[
\exp\left\{
-\theta Z^{\varepsilon,T}_t
\right\}
\,\middle|\,
Z^{\varepsilon,T}_{0} = x_\varepsilon
\right] \nonumber \\
&= \frac{s_\varepsilon E'(\varepsilon e^{-t})}{E'(\varepsilon)}
\frac{E'(\varepsilon (1 - e^{-t} + v_{t,\theta,\varepsilon} ))}{E'(\varepsilon v_{t,\theta,\varepsilon} )}
\left(
\frac{1 - E(\varepsilon (1 - e^{-t} + v_{t,\theta,\varepsilon} ))}{1-E(\varepsilon)}
\right)^{x_\varepsilon/E(\varepsilon)-1}.
\end{align}
Note $s_\varepsilon \to 1$ as $\varepsilon \to 0$. Moreover, \eqref{eq:E'form} states that $E'(t) = t^{\alpha/(1-\alpha)}L_2(t)$ where $L_2$ is slowly varying at zero. It follows that the small-$\varepsilon$ limit of the prefactor is given by  
\begin{align} \label{eq:nz4}
\lim_{\varepsilon \downarrow 0}
\frac{s_\varepsilon E'(\varepsilon e^{-t})}{E'(\varepsilon)}
\frac{E'(\varepsilon (1 - e^{-t} + v_{t,\theta,\varepsilon} ))}{E'(\varepsilon v_{t,\theta,\varepsilon} )}  = (e^{-t}(1-e^{-t}+v_{t,\theta})/v_{t,\theta})^{\frac{\alpha}{1-\alpha}} = \left( \frac{1 + u_t(\theta)}{ 1 + \theta} \right)^\alpha.
\end{align}
Meanwhile, \eqref{eq:Eform} states that $E(t) = t^{\frac{1}{1-\alpha}}L_1(t)$ where $L_1$ is slowly varying at zero. It follows that the small-$\varepsilon$ limit of the latter term is given by 
\begin{align} \label{eq:nz5}
\lim_{\varepsilon \downarrow 0}
\left(
\frac{1 - E(\varepsilon (1 - e^{-t} + v_{t,\theta,\varepsilon} ))}{1-E(\varepsilon)}
\right)^{x_\varepsilon/E(\varepsilon)-1} = e^{ - x\left(-1+(1-e^{-t}+v_{t,\theta})^{1/(1-\alpha)}\right)} = e^{ - xu_t(\theta)}.
\end{align}
Combining \eqref{eq:nz4} and \eqref{eq:nz5} in \eqref{eq:nz3} we obtain the result.
\end{proof}

\subsection{Continuous-state branching processes with immigration} \label{sec:CBI}
In order to interpret the previous result, we now introduce the notion of continuous-state branching processes with immigration, following Kyprianou \cite{kyprianou}. A continuous-state branching process (without immigration) is a Markov process $(Z_t)_{t \geq 0}$ taking values in $[0,\infty) \cup \{\infty\}$, with $\{\infty\}$ absorbing, and satisfying the branching property. That is, if $P_x$ is the law of $(Z_t)_{t \geq 0}$ starting from $Z_0 = x$, then
\begin{align*}
(Z_t)_{t \geq 0} \sim P_x,\quad
(Z'_t)_{t \geq 0} \sim P_{x'} \text{ independently}
\quad\Longrightarrow\quad
(Z_t+Z'_t)_{t \geq 0} \sim P_{x+x'}.
\end{align*}
The branching property entails that
if $E_x$ denotes the expectation operator associated with $P_x$, there exists a function $u_t(\theta)$ such that for all $\theta \geq 0$ we have
\begin{align*}
E_x[e^{-\theta Z_t}] = e^{ - xu_t(\theta)}.
\end{align*}
The function $u_t(\theta)$, which we call the Laplace mechanism, satisfies the semigroup property $u_s(u_t(\theta)) = u_{s+t}(\theta)$. We also note in particular that plainly $u_0(\theta) = \theta$ is the identity.
The central result in the theory of continuous-state branching processes is a L\'evy-Khintchine representation for the Laplace mechanism, stating that $u_t(\theta)$ satisfies the ordinary differential equation
\begin{align*}
\frac{\partial}{\partial t}u_t(\theta) + \psi(u_t(\theta)) = 0,
\end{align*}
where $\psi(\cdot)$ is the Laplace exponent of a spectrally positive L\'evy process, meaning $\psi$ admits a L\'evy-Khintchine representation
\begin{align*}
\psi(\lambda) = - q + a \lambda + \frac{1}{2}\sigma^2 \lambda^2 + \int_{(0,\infty)} (e^{- \lambda u} - 1 + \lambda u \mathrm{1}_{\{ u<1 \} }) \Pi(\mathrm{d}u).
\end{align*}
Here $q\geq0$, $a\in\mathbb R$, $\sigma\geq0$, and $\int_{(0,\infty)}(1\wedge u^2)\Pi(\mathrm du)<\infty$.
The function $\psi$ is also referred to as the branching mechanism.

If $\int_{(0,1)}u\Pi(\mathrm du)<\infty$, the entire compensation term $\lambda u\mathbf1_{\{u<1\}}$ can be removed by adjusting the linear coefficient $a$. This is the finite-variation case used below. Replacing the indicator by $1$, while retaining compensation, instead requires $\int_{[1,\infty)}u\Pi(\mathrm du)<\infty$.
Here $q$ corresponds to a jump to infinity at rate $qx$ when the population size is $x$; in the absence of the other terms, the linear term gives exponential drift $-ax$. The process fluctuates with a Brownian component whose variance is proportional to its size and to $\sigma^2$, and most importantly, when the population size is $x$, at rate $x \Pi(\mathrm{d}u)$ the process experiences positive jumps of size $u$, though these jumps might be compensated by an additional drift factor.
We now discuss continuous-state branching processes with immigration, following Li \cite{li}. In the case where a continuous-state branching process has immigration, there may be a continuous increase in population according to a rate $\delta \geq 0$, and a second measure $\widetilde{\Pi}$ (with the property that $\int_{(0,\infty)} (1\wedge u)\widetilde{\Pi}(\mathrm{d}u)$ is finite) such that the population increases by size $u$ at rate $\widetilde{\Pi}(\mathrm{d}u)$ independently of its current population size. In this case we have
\begin{align*}
E_x[ e^{ - \theta Z_t} ] = \exp \left\{ - x u_t(\theta) - \int_0^t \varphi(u_s(\theta)) \mathrm{d}s \right\},
\end{align*}
where $\varphi$ is given by
\begin{align*}
\varphi(\lambda)
=
\delta\lambda
+
\int_{(0,\infty)}
(1-e^{-\lambda u})
\widetilde{\Pi}(\mathrm{d}u).
\end{align*}
The function $\varphi$ is also referred to as the immigration mechanism.
\subsection{The limit process} \label{sec:limit}
With the discussion in the previous section in mind, we now seek to identify the L\'evy measures for both the branching and immigration mechanisms. In essence, we will now prove Theorem \ref{thm:main}.

\begin{proof}[Proof of Theorem \ref{thm:main}]
We first look to calculate the generator. 
Setting $s = 0$ in \eqref{eq:transitionlaplace0} and differentiating with respect to $t$, we obtain
\begin{equation}
\begin{split}\label{eq:dtran}
\frac{\partial}{\partial t} &\mathbf{E}\left[
\exp\left\{
-\theta Z_t
\right\}
\,\middle|\,
Z_0 = x
\right]\\
&\hspace{2cm}=
\left\{
- x \left( \frac{1 + u_t(\theta) }{1 + \theta} \right)^\alpha
+ \frac{\alpha }{1+\theta}
\left( \frac{1 + u_t(\theta) }{1 + \theta} \right)^{\alpha - 1}
\right\}
e^{ - x u_t(\theta) }
\frac{\partial}{\partial t}u_t(\theta).
\end{split}
\end{equation}
Setting $t = 0$ and using $u_0(\theta) = \theta$, we obtain
\begin{align} \label{eq:dtran2}
\left.
\frac{\partial}{\partial t} \mathbf{E}\left[
\exp\left\{
-\theta Z_t
\right\}
\,\middle|\,
Z_0 = x
\right]
\right|_{t = 0}
=
\left(
- x + \frac{\alpha }{1+\theta}
\right)
e^{ - x\theta}
\left.
\frac{\partial}{\partial t}u_t(\theta)
\right|_{t=0}.
\end{align}
Calculating $\left.\frac{\partial}{\partial t}u_t(\theta)\right|_{t=0}$ using its definition in \eqref{eq:defutheta}, we have
\begin{align} \label{eq:dtran3}
\left.
\frac{\partial}{\partial t}u_t(\theta)
\right|_{t=0}
=
\frac{(1+\theta)^\alpha - (1+\theta)}{1-\alpha}.
\end{align}
Using \eqref{eq:dtran3} in \eqref{eq:dtran2}, and writing $J_t := Z_t - Z_0$, we obtain
\begin{align} \label{eq:dtran4}
\left.
\frac{\partial}{\partial t} \mathbf{E}\left[
\exp\left\{
-\theta J_t
\right\}
\,\middle|\,
Z_0 = x
\right]
\right|_{t = 0}
&=
\left(
- x + \frac{\alpha }{1+\theta}
\right)
\frac{(1+\theta)^\alpha-(1+\theta)}{1-\alpha}
\nonumber\\
&=
x\frac{(1+\theta)-(1+\theta)^\alpha}{1-\alpha}
-
\frac{\alpha}{1-\alpha}
\left(
1-(1+\theta)^{\alpha-1}
\right).
\end{align}
Thus, defining
\begin{align}
\psi(\theta)
&:=
\frac{(1+\theta)-(1+\theta)^\alpha}{1-\alpha},
&
\varphi(\theta)
&:=
\frac{\alpha}{1-\alpha}
\left(
1-(1+\theta)^{\alpha-1}
\right),
\end{align}
we have
\begin{align}
\left.
\frac{\partial}{\partial t} \mathbf{E}\left[
e^{-\theta J_t}
\,\middle|\,
Z_0 = x
\right]
\right|_{t=0}
=
x\psi(\theta)-\varphi(\theta).
\end{align}
Equivalently, if $\mathcal{L}$ denotes the generator of $(Z_t)_{t\geq0}$, then
\begin{align}
\mathcal{L}e^{-\theta x}
=
\left(
x\psi(\theta)-\varphi(\theta)
\right)e^{-\theta x}.
\end{align}
This identifies $(Z_t)_{t\geq0}$ as a continuous-state branching process with branching mechanism $\psi$ and immigration mechanism $\varphi$.

We now find L\'evy--Khintchine representations for the two exponents $\psi(\theta)$ and $\varphi(\theta)$. Since $\alpha\in(0,1)$, the L\'evy measure associated with the  branching mechanism $\psi$ satisfies
\begin{align*}
\int_{(0,1)} u\Pi(\mathrm{d}u)<\infty,
\end{align*}
and hence we may use the form of the L\'evy--Khintchine representation in which the truncation term is absorbed into the linear drift. For the immigration mechanism, we recognise straightforwardly that
\begin{align*}
\varphi(\theta)
=
\frac{\alpha}{1-\alpha}
\left(
1-(1+\theta)^{-(1-\alpha)}
\right)
=
\frac{\alpha}{1-\alpha}
\int_0^\infty
(1-e^{-\theta u})
\gamma_{1-\alpha}(u)\,\mathrm{d}u,
\end{align*}
where
\begin{align*}
\gamma_\kappa(u)
:=
\frac{u^{\kappa-1}e^{-u}}{\Gamma(\kappa)},
\qquad u>0,
\end{align*}
is the density of a $\Gamma(\kappa,1)$ random variable. Thus the L\'evy measure associated with the  immigration mechanism $\varphi$ is
\begin{align} \label{eq:immigrationlevy}
\widetilde{\Pi}(\mathrm{d}u)
=
C_\alpha u^{-\alpha}e^{-u}\,\mathrm{d}u,
\qquad
C_\alpha
:=
\frac{\alpha}{\Gamma(2-\alpha)}.
\end{align}
In particular,
\begin{align*}
\widetilde{\Pi}((0,\infty))
=
\frac{\alpha}{1-\alpha},
\end{align*}
so the immigration component is a compound Poisson process whose jumps occur at rate $\alpha/(1-\alpha)$ and whose jump sizes have the $\Gamma(1-\alpha,1)$ distribution.

For the branching mechanism, first note that
\begin{align*}
\int_0^\infty
(1-e^{-\theta u})u^{-1-\alpha}e^{-u}\,\mathrm{d}u
=
\int_0^\theta
\int_0^\infty
u^{-\alpha}e^{-(1+r)u}\,\mathrm{d}u\,\mathrm{d}r =
\frac{\Gamma(1-\alpha)}{\alpha}
\left(
(1+\theta)^\alpha-1
\right).
\end{align*}
It follows that
\begin{align*}
\psi(\theta)
=
\frac{\theta}{1-\alpha}
+
C_\alpha
\int_0^\infty
(e^{-\theta u}-1)
u^{-1-\alpha}e^{-u}\,\mathrm{d}u.
\end{align*}
Consequently, the L\'evy measure associated with the  branching mechanism $\psi$ is
\begin{align} \label{eq:branchinglevy}
\Pi(\mathrm{d}u)
=
C_\alpha u^{-1-\alpha}e^{-u}\,\mathrm{d}u.
\end{align}
Notice in particular the simple relation
\begin{align*}
\widetilde{\Pi}(\mathrm{d}u)
=
u\Pi(\mathrm{d}u).
\end{align*}

Equivalently, the branching and immigration mechanisms satisfy
\begin{align*}
\varphi(\theta)=\psi'(\theta)-\psi'(0).
\end{align*}
This size-bias relation is the continuous-state counterpart of the distinguished-spine decomposition above.

We may therefore write the generator of the limiting process, for suitable test functions $h$, as
\begin{align} \label{eq:limitgenerator}
\mathcal{L}h(x)
&=
-\frac{x}{1-\alpha}h'(x)
+
C_\alpha x
\int_0^\infty
\left(
h(x+u)-h(x)
\right)
u^{-1-\alpha}e^{-u}\,\mathrm{d}u
\nonumber\\
&\hspace{5cm}\quad+
C_\alpha
\int_0^\infty
\left(
h(x+u)-h(x)
\right)
u^{-\alpha}e^{-u}\,\mathrm{d}u.
\end{align}
Thus the first integral describes jumps generated by the existing population, with branching L\'evy measure $\Pi$, while the second describes jumps arriving through immigration, with immigration L\'evy measure $\widetilde{\Pi}$.
\end{proof}

\subsection{The stationary distribution}

Recall that Proposition \ref{prop:size} states that under $\mathbf{P}^T$, the random variable $E(\varepsilon)N_{T-\varepsilon}$ converges to a $\mathrm{Gamma}(\alpha,1)$ distributed random variable. We now verify stationarity of this limiting law by hand. Let
\begin{align*}
\pi_\alpha(\mathrm{d}x)
=
\frac{x^{\alpha-1}e^{-x}}{\Gamma(\alpha)}\,\mathrm{d}x
\end{align*}
denote the $\mathrm{Gamma}(\alpha,1)$ distribution. To verify that $\pi_\alpha$ is stationary, it is sufficient to check that
\begin{align*}
\int_0^\infty \mathcal{L}h(x)\pi_\alpha(\mathrm{d}x)=0
\end{align*}
for a suitable class of test functions $h$. We verify this for the functions $h(x)=e^{-\theta x}$, $\theta\geq0$, which characterise the distribution. Recall that
\begin{align*}
\mathcal{L}e^{-\theta x}
=
\left(
x\psi(\theta)-\varphi(\theta)
\right)e^{-\theta x},
\end{align*}
where
\[
\psi(\theta)
=
\frac{(1+\theta)-(1+\theta)^\alpha}{1-\alpha}
\quad\textrm{ and }\quad
\varphi(\theta)
=
\frac{\alpha}{1-\alpha}
\left(
1-(1+\theta)^{\alpha-1}
\right).
\]
The Laplace transform of $\pi_\alpha$ is
\begin{align*}
\int_0^\infty e^{-\theta x}\pi_\alpha(\mathrm{d}x)
=
(1+\theta)^{-\alpha},
\end{align*}
and therefore
\begin{align*}
\int_0^\infty xe^{-\theta x}\pi_\alpha(\mathrm{d}x)
=
\alpha(1+\theta)^{-\alpha-1}.
\end{align*}
It follows that
\begin{align*}
\int_0^\infty
\mathcal{L}e^{-\theta x}
\pi_\alpha(\mathrm{d}x)
&=
\alpha\psi(\theta)(1+\theta)^{-\alpha-1}
-
\varphi(\theta)(1+\theta)^{-\alpha}
\\
&=
\frac{\alpha}{1-\alpha}
\left(
(1+\theta)^{-\alpha}-(1+\theta)^{-1}
\right)
\\
&\qquad-
\frac{\alpha}{1-\alpha}
\left(
(1+\theta)^{-\alpha}-(1+\theta)^{-1}
\right)
\\
&=0.
\end{align*}
Thus $\pi_\alpha$ is stationary for $(Z_t)_{t\geq0}$, as claimed.

\subsection{Campbell's formula and the stationary distribution} \label{sec:campbell}

Recall that we observed, following the statement of Theorem \ref{thm:main}, that we have the distributional identity 
\begin{align*}
\sum_{i \geq 1} B_i = Z_0 \sim \mathrm{Gamma}(\alpha,1),
\end{align*} 
where $B_i=Y^i_{T_i}$ are the immigrant-family contributions defined after Theorem \ref{thm:main}. Recall that $(-T_i)$ form a Poisson process of rate $\alpha/(1-\alpha)$, the marks $A_i$ are independent Gamma$(1-\alpha,1)$ variables independent of these times, and, conditional on the times and marks, the $Y^i$ are independent CSBPs with Laplace exponent \eqref{eq:ut} started from $A_i$.

In particular, we can describe the conditional law of $B_i$ given $A_i$ and $T_i$ through its Laplace transform 
\begin{align*}
\mathbf{E}[ e^{ - \theta B_i} | T_i, A_i ] = \mathbf{E}_{A_i}[ e^{ - \theta Y_{T_i} } ] = \exp\{ - A_i u_{T_i}(\theta) \},
\end{align*}
where $u_t(\theta)$ is the underlying Laplace exponent of the subcritical CSBP.

Note that $\mathbf{E}[ e^{ - \lambda A_i} ] = (1+\lambda)^{-(1-\alpha)}$. 
In particular, averaging over $A_i$ we obtain 
\begin{align*}
\mathbf{E}[ e^{ - \theta B_i} | T_i ] = (1 + u_{T_i}(\theta) )^{-(1-\alpha)}.
\end{align*}
In particular, we can write
\begin{align*}
\mathbf{E}[ e^{ - \theta Z_0 } ] = \mathbf{E} \left[ \exp \left\{ \sum_{i \geq 1} g_\theta(T_i) \right\} \right] 
\end{align*}
where $g_\theta(t) = \log \left((1 + u_t(\theta) )^{-(1-\alpha)}\right)$. Now by the Laplace functional of a Poisson process \cite[Theorem 3.9]{LP} we have
\begin{align} \label{eq:intcalc0}
\mathbf{E}[ e^{ - \theta Z_0 } ] = \exp \left\{ \int_0^\infty \lambda(t)( e^{ g_\theta(t)} - 1 ) \mathrm{d}t \right\} = \exp \left\{ \frac{\alpha}{1-\alpha} \int_0^\infty ((1+u_t(\theta))^{-(1-\alpha)}- 1 ) \mathrm{d}t \right\},
\end{align}
where we have used the fact that events a time $t$ before $0$ occurred at constant rate $\lambda(t) = \alpha/(1-\alpha)$. 

Now, using the fact that $u_t(\theta) = \left( 1 + e^{-t}((1+\theta)^{1-\alpha} - 1) \right)^{1/(1-\alpha)} - 1$, we have
\begin{align} \label{eq:intcalc} 
\int_0^\infty ((1+u_t(\theta))^{-(1-\alpha)}- 1 ) \mathrm{d}t &= - \int_0^\infty \frac{ e^{-t}((1+\theta)^{1-\alpha}-1) }{1 + e^{-t}((1+\theta)^{1-\alpha}-1)} \mathrm{d}t \nonumber \\
&= + \int_0^\infty \frac{\mathrm{d}}{\mathrm{d}t} \log( 1 + e^{-t}((1+\theta)^{1-\alpha} - 1 ) ) \mathrm{d}t \nonumber\\
&= - (1-\alpha)\log(1+\theta).
\end{align} 
Using \eqref{eq:intcalc} in \eqref{eq:intcalc0} we obtain 
\begin{align*}
\mathbf{E}[ e^{ - \theta Z_0} ] = (1 + \theta)^{ - \alpha},
\end{align*} 
confirming the fact that $Z_0$ is Gamma distributed with parameter $\alpha$.

\subsection{The CBI as a size-biased transform of the subcritical CSBP}

We close this section by noting briefly that there is an interesting relationship between the CBI $(Z_t)_{t \geq 0}$ and $(Y_t)_{t \geq 0}$: namely, $(Z_t)_{t \geq 0}$ starting from $x > 0$ has the law of $(Y_t)_{t \geq 0}$ starting from $x > 0$ but size-biased at each time. To see this, under $\mathbf{P}_x$ let $(Y_t)_{t \geq 0}$ be a CSBP with Laplace exponent $u_t(\theta)$ (so that $\mathbf{P}_x [ e^{ - \theta Y_t} ]  = e^{ - x u_t(\theta)}$) and define the change of measure $\mathbf{Q}_x$ by 
\begin{align*}
\left.\frac{\mathrm d\mathbf Q_x}{\mathrm d\mathbf P_x}\right|_{\mathcal F_t}
=\frac{e^tY_t}{x},
\end{align*}
where we note that since $\mathbf{P}_x [ Y_t ] = xe^{-t}$, the latter is a unit-mean martingale. 
Now note that we can calculate the Laplace transform of $Y_t$ under $\mathbf{Q}_x$ directly. Indeed, we have
\begin{align*}
\mathbf{Q}_x [ e^{ - \theta Y_t} ] = \frac{e^t}{x} \mathbf{P}_x [ Y_t e^{ - \theta Y_t} ] = - \frac{e^t}{x} \frac{\partial}{\partial \theta} e^{ - x u_t(\theta) } = e^t u_t'(\theta) e^{ - x u_t(\theta)}.
\end{align*}
Now note that $e^t u_t'(\theta) = \left( \frac{1+u_t(\theta)}{1+\theta}\right)^\alpha$, cf.\ \eqref{eq:nz4}. This is the immortal-spine transform associated with the CSBP $Q$-process; see \cite[Section 12.3.2]{kyprianou}. Here $Y$ extinguishes without hitting zero, so the transform should not be interpreted as literal conditioning on positivity at every finite time.


\section{Proofs of Theorems \ref{thm:coal} and \ref{thm:coal2}: Coalescent structure of the limiting process} \label{sec:C}

\subsection{Time reversals of Markov processes}

When $Z = (Z_t)_{t \in \mathbb{R}}$ is a stationary real-valued càdlàg time-homogeneous Markov process with stationary distribution $\pi$, we can consider its time reversal $R=(R_t)_{t\in\mathbb R}$, where
\begin{align*}
R_t := Z_{(-t)-}, \qquad t\in\mathbb R.
\end{align*}
This time-reversed process $R$ is again a stationary càdlàg Markov process with stationary distribution $\pi$, and its generator is the $L^2(\pi)$-adjoint of that of $Z$.

In particular, suppose that $\pi$ has a density, also denoted by $\pi$, and that $Z$ has jump kernel
\begin{align*}
J(x,\mathrm{d}y)=p(x,y)\,\mathrm{d}y,
\end{align*}
meaning that its jump measure has predictable compensator
$p(Z_{t-},y)\,\mathrm{d}y\,\mathrm{d}t$.
Then the time-reversed jump density is given by the balance equation
\begin{align} \label{eq:reversal}
\pi(x) p(x,y) = \pi(y) p^{\mathrm{rev}}(y,x),
\end{align}
for almost every $(x,y)$ off the diagonal; see, e.g. \cite[Equation (5.13)]{CL}. This determines
$p^{\mathrm{rev}}(y,x)$ wherever $\pi(y)>0$.
For further information on time-reversal of Markov processes, the reader might consult \cite{CW}, \cite{CW2} or \cite{nagasawa}.


\subsection{Transition rates for the time-reversed Markov process}

Let us take stock of our setting. In Theorem \ref{thm:main} we introduced a stationary Markov process $(Z_t)_{t \geq 0}$ with only positive jumps and with a $\mathrm{Gamma}(\alpha,1)$ stationary distribution. We will see in this section that the time-reversed process has a pleasant description, and that we can moreover keep track of the contribution of spinal and nonspinal events to its jump rates.

We can read off from \eqref{eq:ngen} that due to the branching component, the forwards-in-time process $(Z_t)_{t \geq 0}$ has exponential decay at rate $\frac{1}{1-\alpha}$ together with jumps from $x$ to $\mathrm{d}y$ (with $x < y$) at rate
\begin{align*}
p_{\mathrm{n}}(x,y)\mathrm{d}y := C_\alpha x (y-x)^{-(1+\alpha)} e^{ - (y-x)}\mathrm{d}y.
\end{align*}
Likewise, due to the spinal immigration component, the forwards-in-time process has jumps from $x$ to $\mathrm{d}y$ at rate 
\begin{align*}
p_{\mathrm{s}}(x,y)\mathrm{d}y := C_\alpha (y-x)^{ - \alpha} e^{ - (y-x)}\mathrm{d}y.
\end{align*}

In our case, in the setting of \eqref{eq:reversal} we can attribute the jumps backwards-in-time to either branching or immigration jumps. Accordingly, we can calculate that, backwards in time, we have jumps from $y$ to $\mathrm{d}x$ (with $y > x$) due to branching events at rate
\begin{align*}
p^{\mathrm{rev}}_n(y,x)\mathrm{d}x := \frac{\pi(x)}{\pi(y)} p_{\mathrm n}(x,y)\,\mathrm{d}x = C_\alpha x^\alpha (y-x)^{-(1+\alpha)} y^{ 1-\alpha}\mathrm{d}x,
\end{align*}
and due to spinal immigration events at rate 
\begin{align*}
p^{\mathrm{rev}}_s(y,x)\mathrm{d}x := \frac{\pi(x)}{\pi(y)} p_{\mathrm s}(x,y)\,\mathrm{d}x = C_\alpha x^{\alpha-1} (y-x)^{-\alpha} y^{ 1-\alpha}\mathrm{d}x.
\end{align*}
The total rate of jumps from $y$ to $\mathrm{d}x$ is $p^{\mathrm{rev}}(y,x)\mathrm{d}x$ where 
\begin{align*}
    p^{\mathrm{rev}}(y,x) := p^{\mathrm{rev}}_n(y,x) + p^{\mathrm{rev}}_s(y,x).
\end{align*}
Of course, backwards in time, there is also continuous exponential \emph{growth} of the population at rate $1/(1-\alpha)$. 

Let us highlight a key structural property. Namely, given that a jump from $y$ to $\mathrm{d}x$ occurs at some point, the probability it was due to spinal immigration is given by
\begin{align} \label{eq:ratio}
\frac{p_s^{\mathrm{rev}}(y,x)}{p_s^{\mathrm{rev}}(y,x) + p_n^{\mathrm{rev}}(y,x) } = \frac{y-x}{y}.
\end{align}
Let $v = (y-x)/y$ be the fraction of population size lost. 
From a position $y$, the rate at which a fraction $\mathrm{d}v$ of the population size is lost is given by 
\begin{align*}
yp^{\mathrm{rev}}(y,(1-v)y)\,\mathrm{d}v = C_\alpha y^{1-\alpha}v^{-1-\alpha}(1-v)^{\alpha-1}\,\mathrm{d}v = \Gamma(1+\alpha)y^{1-\alpha}\frac{\mathcal B_{2-\alpha,\alpha}(\mathrm{d}v)}{v^2}
\end{align*}
where 
\begin{align*}
\mathcal{B}_{2-\alpha,\alpha}(\mathrm{d}v) = \frac{\Gamma(2)}{\Gamma(2-\alpha)\Gamma(\alpha)} v^{1 - \alpha} (1-v)^{ \alpha - 1} \mathrm{d}v \qquad v \in [0,1],
\end{align*}
is the probability measure of a Beta random variable with parameters $2-\alpha$ and $\alpha$.

We are now ready to prove Theorem \ref{thm:coal}.

\begin{proof}[Proof of Theorem \ref{thm:coal}]
Suppose that the coalescent currently has $b$ blocks  and the reverse-time population has size $x$. At a reverse jump which removes a fraction $v$ of the population, each block is marked independently with probability $v$, and all marked blocks merge. Thus, for a particular collection of $j$ blocks, the rate at which precisely those $j$ blocks are marked is
\begin{align*}
\Gamma(1+\alpha)x^{1-\alpha}
\int_0^1
v^{j-2}(1-v)^{b-j}
\mathcal B_{2-\alpha,\alpha}(\mathrm{d}v).
\end{align*}
Consequently, relative to the joint filtration of the reverse population and partition processes, the merger rates are those of a Beta$(2-\alpha,\alpha)$-coalescent multiplied by the common time-dependent factor
$\Gamma(1+\alpha)R_{t-}^{1-\alpha}.$ In particular, define 
\begin{align*}
A(r) :=
\int_0^r
\Gamma(1+\alpha)R_s^{1-\alpha}\,\mathrm{d}s,
\end{align*}
so that $I$ is the right-continuous inverse of $A$.
After the time change $t\mapsto I(t)$, the common factor is removed. Hence, when viewed in its own filtration, $(\pi_{I(t)})_{t\geq0}$ has merger rates
\begin{align*}
\lambda_{b,j}
=
\int_0^1
v^{j-2}(1-v)^{b-j}
\mathcal B_{2-\alpha,\alpha}(\mathrm{d}v),
\end{align*}
and is therefore a standard Beta$(2-\alpha,\alpha)$-coalescent, as required. 
\end{proof}

We now prepare the proof of Theorem \ref{thm:coal2}.
Again, in light of \eqref{eq:ratio} and the fact that $v = (y-x)/y$, given that there is a loss of a fraction $v$ of the population, the probability that this event was due to spinal immigration is $v$, and the probability it was due to branching is $1-v$, \emph{regardless of the population size $y$}. In particular, when the size of the reverse-time population process is $y$, the rate at which a fraction $\mathrm{d}v$ of the population size is lost has the amenable decomposition
\begin{align*}
\Gamma(1+\alpha)y^{1-\alpha}\frac{\mathcal B_{2-\alpha,\alpha}(\mathrm{d}v)}{v^2} = \underbrace{\Gamma(1+\alpha)y^{1-\alpha}v\frac{\mathcal B_{2-\alpha,\alpha}(\mathrm{d}v)}{v^2}}_{\text{immigration}} + \underbrace{\Gamma(1+\alpha)y^{1-\alpha}(1-v)\frac{\mathcal B_{2-\alpha,\alpha}(\mathrm{d}v)}{v^2}}_{\text{branching}}.
\end{align*}

\begin{proof}[Proof of Theorem \ref{thm:coal2}]
By \eqref{eq:ratio} and the fact that $v = (y-x)/y$ is the fraction of the population lost, conditional on a reverse jump of relative size $v$, its forward-time origin is spinal with probability $v$ and nonspinal with probability $1-v$.

After making the time change $t\mapsto I(t)$, the relative jump intensity, with respect to the joint past of the population and coloured partition, is
\begin{align*}
\mathrm{d}t\,v^{-2}\mathcal B_{2-\alpha,\alpha}(\mathrm{d}v).
\end{align*}
We simply multiply this event rate by $v$ to obtain the spinal jumps, and $1-v$ to obtain the nonspinal jumps. Suppose that the coloured partition currently has $b$ blue blocks, and fix a prescribed collection of $j$ of them. The probability that precisely these blue blocks are marked is $v^j(1-v)^{b-j}$.

At a nonspinal event, the marked blue blocks merge into a blue block and the red block is unaffected. Consequently, the rate of this transition is
\begin{align*}
\int_0^1 v^j(1-v)^{b-j}(1-v)v^{-2}
\mathcal B_{2-\alpha,\alpha}(\mathrm{d}v)
=\lambda^{\mathrm{blue}}_{b,j},
\qquad 2\leq j\leq b.
\end{align*}
At a spinal event, the marked blue blocks merge with the red block. The rate of this transition is
\begin{align*}
\int_0^1 v^j(1-v)^{b-j}v\,v^{-2}
\mathcal B_{2-\alpha,\alpha}(\mathrm{d}v)
=\lambda^{\mathrm{red}}_{b,j},
\qquad 1\leq j\leq b.
\end{align*}
These rates are finite and depend only on the current coloured partition. Hence, in its own filtration, $(\widetilde\pi_{I(t)})_{t\geq0}$ is the finite-state continuous-time Markov chain with the transition rates of the red/blue coalescent. This completes the proof, as both processes start from the red singleton $\{0\}$ and the blue singletons $\{1\},\ldots,\{k\}$. 
\end{proof}

\section*{Acknowledgements}  
\noindent 
The authors would like to acknowledge the support of the New Zealand Aotearoa Royal Society Te Ap\={a}rangi Marsden Fund (22-UOA-052).

\section*{AI Disclosure}
ChatGPT-5.6 was used to run the simulation and generate images for Figures \ref{fig:explosive}, \ref{fig:Zt} and \ref{fig:redblue}. AI was also used for proofreading purposes.


\begin{thebibliography}{99}

\bibitem{AbrahamDelmasHe2021}
\textsc{Abraham, R., Delmas, J.-F. and He, H.} (2021).
\newblock Some properties of stationary continuous state branching processes.
\newblock \emph{Stochastic Processes and their Applications} \textbf{141}, 309--343.

\bibitem{amini}
\textsc{Amini, O., Devroye, L., Griffiths, S. and Olver, N.} (2013).
\newblock On explosions in heavy-tailed branching random walks.
\newblock \emph{Annals of Probability} \textbf{41}, 1864--1899.

\bibitem{AN72}
\textsc{Athreya, K. B. and Ney, P. E.} (1972).
\newblock \emph{Branching Processes}.
\newblock Springer-Verlag, Berlin--Heidelberg--New York.

\bibitem{BerestyckiBerestyckiSchweinsberg2007}
\textsc{Berestycki, J., Berestycki, N. and Schweinsberg, J.} (2007).
\newblock Beta-coalescents and continuous stable random trees.
\newblock \emph{Annals of Probability} \textbf{35}, 1835--1887.

\bibitem{BLG}
\textsc{Bertoin, J. and Le Gall, J.-F.} (2000).
\newblock The Bolthausen--Sznitman coalescent and the genealogy of continuous-state branching processes.
\newblock \emph{Probability Theory and Related Fields} \textbf{117}, 249--266.

\bibitem{BGT}
\textsc{Bingham, N. H., Goldie, C. M. and Teugels, J. L.} (1987).
\newblock \emph{Regular Variation}.
\newblock Cambridge University Press, Cambridge.

\bibitem{BirknerEtAl2005}
\textsc{Birkner, M., Blath, J., Capaldo, M., Etheridge, A. M., M\"ohle, M., Schweinsberg, J. and Wakolbinger, A.} (2005).
\newblock Alpha-stable branching and beta-coalescents.
\newblock \emph{Electronic Journal of Probability} \textbf{10}, 303--325.

\bibitem{CW}
\textsc{Chung, K. L. \& Walsh, J. B.} (1969). To reverse a Markov process. \emph{Acta Math}, 123(1), 225-251.

\bibitem{CW2}
\textsc{Chung, K. L. \& Walsh, J. B.} (2005). \emph{Markov processes, Brownian motion, and time symmetry.} New York, NY: Springer New York.

\bibitem{CL}
\textsc{Conforti, G. \& L\'eonard, C.} (2022). Time reversal of Markov processes with jumps under a finite entropy condition. \emph{Stoch. Process. Appl.} \textbf{144}, 85--124.

\bibitem{doney}
\textsc{Doney, R. A.} (1984).
\newblock A note on some results of Schuh.
\newblock \emph{Journal of Applied Probability} \textbf{21}, 192--196.

\bibitem{FoucartHenard2013}
\textsc{Foucart, C. and H\'enard, O.} (2013).
\newblock Stable continuous-state branching processes with immigration and Beta--Fleming--Viot processes with immigration.
\newblock \emph{Electronic Journal of Probability} \textbf{18}, no.~23, 1--21.

\bibitem{FoucartMaMallein2019}
\textsc{Foucart, C., Ma, C. and Mallein, B.} (2019).
\newblock Coalescences in continuous-state branching processes.
\newblock \emph{Electronic Journal of Probability} \textbf{24}, no.~103, 1--52.

\bibitem{grey}
\textsc{Grey, D. R.} (1974).
\newblock Asymptotic behaviour of continuous time, continuous state-space branching processes.
\newblock \emph{Journal of Applied Probability} \textbf{11}, 669--677.

\bibitem{greyagedependent}
\textsc{Grey, D. R.} (1974).
\newblock Explosiveness of age-dependent branching processes.
\newblock \emph{Zeitschrift f\"ur Wahrscheinlichkeitstheorie und Verwandte Gebiete} \textbf{28}, 129--137.

\bibitem{greyexplosion}
\textsc{Grey, D. R.} (1989).
\newblock A note on explosiveness of Markov branching processes.
\newblock \emph{Advances in Applied Probability} \textbf{21}, 226--228.

\bibitem{HarrisJohnstonPardo2024}
\textsc{Harris, S. C., Johnston, S. G. G. and Pardo, J. C.} (2024).
\newblock Universality classes for the coalescent structure of heavy-tailed Galton--Watson trees.
\newblock \emph{Annals of Probability} \textbf{52}, 387--433.

\bibitem{HarrisJohnstonRoberts2020}
\textsc{Harris, S. C., Johnston, S. G. G. and Roberts, M. I.} (2020).
\newblock The coalescent structure of continuous-time Galton--Watson trees.
\newblock \emph{Annals of Applied Probability} \textbf{30}, 1368--1414.

\bibitem{iyerlodewijks}
\textsc{Iyer, T. and Lodewijks, B.} (2023).
\newblock On the structure of genealogical trees associated with explosive Crump--Mode--Jagers branching processes.
\newblock arXiv:2311.14664.

\bibitem{JKR}
\textsc{Johnston, S. G. G., Kyprianou, A. E. and Rogers, T.} (2023).
\newblock Multitype $\Lambda$-coalescents.
\newblock \emph{Annals of Applied Probability} \textbf{33}, 4210--4237.

\bibitem{JL}
\textsc{Johnston, S. G. G. and Lambert, A.} (2023).
\newblock The coalescent structure of uniform and Poisson samples from multitype branching processes.
\newblock \emph{Annals of Applied Probability} \textbf{33}, 4820--4857.

\bibitem{kyprianou}
\textsc{Kyprianou, A. E.} (2014).
\newblock \emph{Fluctuations of L\'evy Processes with Applications: Introductory Lectures}, 2nd ed.
\newblock Springer, Heidelberg.

\bibitem{labbe}
\textsc{Labb\'e, C.} (2013).
\newblock Quasi-stationary distributions associated with explosive CSBP.
\newblock \emph{Electronic Communications in Probability} \textbf{18}, article no.~57, 1--13.

\bibitem{Lambert2002}
\textsc{Lambert, A.} (2002).
\newblock The genealogy of continuous-state branching processes with immigration.
\newblock \emph{Probability Theory and Related Fields} \textbf{122}, 42--70.

\bibitem{LP}
\textsc{Last, G., \& Penrose, M.} (2018). Lectures on the Poisson process (Vol. 7). Cambridge University Press.

\bibitem{li}
\textsc{Li, Z.} (2020).
\newblock Continuous-state branching processes with immigration.
\newblock In Y.~Jiao (ed.), \emph{From Probability to Finance}, Springer, Singapore, 1--69.

\bibitem{LYZ}
\textsc{Li, P.-S., Yang, X. and Zhou, X.} (2019).
A general continuous-state nonlinear branching process.
\emph{Ann. Appl. Probab.} \textbf{29}(4), 2523--2555.

\bibitem{lodewijks}
\textsc{Lodewijks, B.} (2026).
\newblock A star is born: explosive Crump--Mode--Jagers branching processes.
\newblock \emph{Journal of Applied Probability} \textbf{63}(3), 1244--1262.

\bibitem{nagasawa}
\textsc{Nagasawa, M.} (1964).
\newblock Time reversions of Markov processes.
\newblock \emph{Nagoya Mathematical Journal} \textbf{24}, 177--204.

\bibitem{pakes}
\textsc{Pakes, A. G.} (1993).
\newblock Explosive Markov branching processes: entrance laws and limiting behaviour.
\newblock \emph{Advances in Applied Probability} \textbf{25}, 737--756.

\bibitem{sagitov}
\textsc{Sagitov, S. M.} (1996).
\newblock On an explosive branching process.
\newblock \emph{Theory of Probability and Its Applications} \textbf{40}, 575--577.

\bibitem{schuh}
\textsc{Schuh, H.-J.} (1982).
\newblock Sums of i.i.d.\ random variables and an application to the explosion criterion for Markov branching processes.
\newblock \emph{Journal of Applied Probability} \textbf{19}, 29--38.

\bibitem{Schweinsberg2003}
\textsc{Schweinsberg, J.} (2003).
\newblock Coalescent processes obtained from supercritical Galton--Watson processes.
\newblock \emph{Stochastic Processes and their Applications} \textbf{106}, 107--139.

\end{thebibliography}
\end{document}